\newtheorem{theorem}{Theorem} [section]
\newtheorem{lemma}[theorem]{Lemma}
\newtheorem{proposition}[theorem]{Proposition}
\newtheorem{definition}[theorem]{Definition}
\newcommand{\nks}{\ensuremath{\mathrm{SL(2,\mathbb{R})}\times \mathrm{SL(2,\mathbb{R})}}} 
\renewcommand{\phi}{\varphi}
\begin{document}

\title{Almost complex surfaces in the nearly K\"{a}hler  $\mathbf{SL(2,\mathbb{R})\times SL(2,\mathbb{R})}$}
\author{Elsa Ghandour and Luc Vrancken}

\address{Elsa Ghandour, Faculty of Science\\
University of Lund\\
Box 118, Lund 221\\
Sweden}
\email{Elsa.Ghandour@math.lu.se}

\address{Luc Vrancken, LMI-Laboratoire de Math\'ematiques pour l'Ingénieur \\
Universit\'e Polytechnique Hauts-de-France \\
Campus du Mont Houy  \\
59313 Valenciennes Cedex 9, France}

\email{luc.vrancken@uphf.fr}

\begin{abstract} 
The space $SL(2,\mathbb{R})\times SL(2,\mathbb{R})$ admits a natural homogeneous pseudo-Riemannian nearly Kaehler structure. We investigate almost complex surfaces in this space. In particular we obtain a complete classification of the totally geodesic almost complex surfaces and of the almost complex surfaces with parallel second fundamental form. 
\end{abstract}



\maketitle

\thispagestyle{empty}


\section{Introduction}   

Almost complex structures were introduced by C. Ehresmann in 1947 \cite{Eh1} in order to study the problem of finding differentiable manifolds which admit a complex analytic structure related to the differentiable structure on the manifold. Approaching the same problem but by using different methods, H. Hopf has shown that the spheres $S^4$ and $S^8$ do not have any complex structure \cite{Ho}. Moreover, A. Borel and J.P. Serre \cite{Bo-Se} have shown that $S^2$ and $S^6$ are the only spheres admitting almost complex structures. Up to today it is still an open problem to determine if on $S^6$ there exists an integrable almost complex structure. 

A manifold endowed with an almost complex structure is said to be an almost complex manifold. If the structure is compatible with the metric the manifold is called an almost hermitian manifold. We can consider different types of submanifolds of an almost hermitian manifold (C. Ehresmann \cite{Eh2}). Two natural classes for instance are almost complex and totally real submanifolds which are defined as follows.  A submanifold $M$ of an almost complex manifold $\widetilde{M}$ is called almost complex (resp. totally real) if each tangent space of $M$ is mapped into itself (resp. into the normal space) by the almost complex structure of $\widetilde{M}\,.$ An almost complex submanifold is endowed with an almost complex structure induced by that of the ambient manifold.
C. Ehresmann showed (non-published results) that there exist always, for any dimension of the manifold, real submanifolds which is not always true for almost complex submanifolds if the dimension of the manifold is not 2. For example, $S^6$ does not admit almost complex submanifolds of dimension 4 and there does not exist almost complex functions on $S^6\,.$ The problem of existence of complex analytic functions on a complex manifold was also studied by S.S. Chern \cite{Ch}. 

We are interested in nearly K\"ahler manifolds, which are (pseudo-)Riemannian manifolds endowed with an almost complex structure compatible with a metric on the manifold satisfying one additional condition (Definition \ref{def-NK}). These manifolds have been studied intensively by Gray \cite{Gr}. In a recent work of J.-B. Butruille, \cite{Bu} it has been shown that the only homogeneous 6-dimensional Riemannian nearly K\"ahler manifolds are the nearly K\"ahler 6-sphere, $S^3\times S^3\,,$ the projective space $\mathbb{C}P^3$ and the flag manifold $SU(3)/U(1)\times U(1)\,.$

In this paper we study the analogue of the nearly K\"ahler space $S^3\times S^3$ in the pseudo-Riemannian case, which is $\mathrm{SL(2,\mathbb{R})}\times \mathrm{SL(2,\mathbb{R})}$. In particular, we study almost complex surfaces in the 6-dimensional space $\mathrm{SL(2,\mathbb{R})}\times \mathrm{SL(2,\mathbb{R})}$ where a nearly K\"ahler structure exists naturally. We give this structure explicitly and we investigate in details totally geodesic and parallel almost complex surfaces in this space.

\section{Preliminaries}
In this section, we will recall some basic definitions, properties and formulas that will be used in the following paper.
\\ An almost Hermitian manifold is a (pseudo-)Riemannian manifold $(M,g,J)$ which admits an endomorphism $J
$ of the tangent bundle such that
\begin{itemize}
\item $J^2=-Id$, i.e. $J$ is an almost complex structure,
\item $J$ is compatible with the metric $g$, i.e. $g(JX,JY)=g(X,Y)$ for all vector fields $X$ and $Y\,.$
\end{itemize}
We remark that the first condition requires the real dimension of $M$ to be even.
\begin{definition}\label{def-NK}
An almost Hermitian manifold is called a nearly K\"{a}hler manifold if the almost complex structure $J$ verifies $$(\widetilde{\nabla}_XJ)X=0$$ where, $\widetilde{\nabla}$ is the Levi-Civita connection associated to $g.$ This is equivalent to say that the tensor $G$ defined by $G(X,Y):=(\widetilde{\nabla}_XJ)Y$ is skew-symmetric.
\end{definition} \noindent For all vector fields $X,Y,Z\,,$ the tensor $G$ satisfies the following properties:
\begin{equation}\label{TensorG}
\begin{cases}
G(X,Y)+G(Y,X)&=0\,,\\
G(X,JY)+JG(X,Y)&=0\,,\\
g(G(X,Y),Z)+g(G(X,Z),Y)&=0\,.
\end{cases}
\end{equation}
\\
\noindent Remark that from the last two equations of \eqref{TensorG}, we can notice that the lowest dimension in which non-K\"{a}hler (i.e. $G$ does not vanish identically) nearly K\"{a}hler manifold can exist is 6.\\
\noindent There are two very natural types of submanifolds of nearly K\"{a}hler manifolds to be studied: almost complex submanifolds for which the  tangent spaces are invariant by the almost complex structure $J$ (i.e. $JTM=TM$) and totally real submanifolds for which $J$ interchanges tangent and normal vectors (i.e. $JTM\perp TM$).

\noindent Moreover, it will be convenient to mention some formulas that will be used through the calculation of the paper. First, the formula of Gauss which gives the decomposition of $\nabla_XY$ for tangent vectors $X,Y$ on a submanifold $M$, where $\nabla$ is the induced connection on $M.$ The Gauss formula is given by  $$\nabla_XY=\widetilde{\nabla}_XY+h(X,Y)$$ where, $\widetilde{\nabla}$ is the Levi-Civita connection on the ambient space and $h$ is the second fundamental form. Next, the formula of Weingarten that gives the decomposition along a tangent and a normal vector $\xi$, is given by $$\nabla_X\xi=-A_{\xi}X+\nabla^{\perp}_X\xi$$ where, $A$ is the shape operator given by the relation $g(h(X,Y),\xi)=g(A_{\xi}X,Y)$ and $\nabla^{\perp}$ is the normal connection on $M\,.$ \\
We recall from \cite{Bo-Di-Di-Vr} some useful relations that hold for an almost complex submanifold $M$ and that follow from the Gauss and Weingarten formulas:
\begin{align*}
\nabla_X JX&=J\nabla_X X, & h(X,JY)&=Jh(X,Y),\\
A_{J\xi}X=JA_{\xi} X&=-A_{\xi}JX & G(X,\xi)&=\nabla_X^{\perp}J\xi-J\nabla_X^{\perp}\xi
\end{align*}
where, $X,Y$ are tangent vectors and $\xi$ is a normal vector of $M.$ \\ \\
Finally, let us finish this section by the Gauss, Codazzi  equations  that are respectively given by:
\begin{equation*}
\begin{cases}
g(\widetilde{R}(X,Y)Z,W)=g(R(X,Y)Z,W)+g(h(X,Z),h(Y,W))-g(h(Y,Z),h(X,W))\\
(\widetilde{R}(X,Y)Z)^\perp =(\nabla\widetilde{h})(X,Y,Z)-(\nabla\widetilde{h})(Y,X,Z)
\end{cases}
\end{equation*}
Another usefull property is the Ricci identity which states that
\begin{equation*}
(\nabla^2h)(X,Y,Z,W)-(\nabla^2h)(Y,X,Z,W)=R^\perp(X,Y)h(Z,W)-h(R(X,Y)Z,W)-h(X,R(Y,Z)W).
\end{equation*}

\section{The nearly K\"{a}hler structure on SL(2,$\mathbb{R}$)$\times$SL(2,$\mathbb{R}$)}
To begin with, consider the nondegenerate indefinite inner product in $\mathbb{R}^4$ given by
\begin{equation}
\langle a,b\rangle=-\tfrac{1}{2}(a_1b_4-a_2b_3-a_3b_2+a_4b_1)\,,
\end{equation}
where $a=(a_1,a_2,a_3,a_4)$, $b=(b_1,b_2,b_3,b_4)\in\mathbb{R}^4\,.$

\noindent By identifying the $2\times 2$ real matrices space, $M(2,\mathbb{R})$, with the 4-dimensional Euclidean space $\mathbb{R}^4$, the above inner product can be viewed on $M(2,\mathbb{R})$ as
\begin{equation}
\left\langle A,B\right\rangle=-\tfrac{1}{2}{\rm Trace}(({\rm adj} A)^T B) \quad {\rm for}\ A,B\in M(2,\mathbb{R})\,.
\end{equation}
Therefore, the space of $2\times 2$-real matrices with a determinant $1$ denoted by $SL(2,\mathbb{R}),$ is given by
$$
SL(2,\mathbb{R})=\left\{A\in M(2,\mathbb{R}) | \left\langle A, A \right\rangle=-1\right\}.
$$ 

\noindent The restriction of $\left\langle.,.\right\rangle$ to the tangent spaces at the points of $SL(2,\mathbb{R})$ will be denoted also by $\left\langle.,.\right\rangle\,.$ We then consider $SL(2,\mathbb{R})$ equipped with the pseudo-Riemannian metric $\left\langle.,.\right\rangle$.

\noindent Now, let us proceed by defining the tangent space of $\mathbf{ SL(2,\mathbb{R})\times SL(2,\mathbb{R})}.$ For that aim, let $(A,B)\in SL(2,\mathbb{R})\times SL(2,\mathbb{R})$. By the natural identification $$T_{(A,B)}(SL(2,\mathbb{R})\times SL(2,\mathbb{R}))\cong T_ASL(2,\mathbb{R})\oplus T_BSL(2,\mathbb{R}),$$ we may write a tangent vector at $(A,B)$ as $Z(A,B)=(U(A,B), V(A,B))$ or simply $Z=(U,V)\,.$
\\
Let $A=\begin{pmatrix}
a & b\\
c & d
\end{pmatrix}\in SL(2,\mathbb{R})$. The tangent vector fields $X_1,X_2$ and $X_3,$  given by
\begin{eqnarray*}
X_1(A)&=&A\begin{pmatrix}
1 & 0\\
0 & -1
\end{pmatrix}=\begin{pmatrix}
a & -b\\
c & -d
\end{pmatrix},\\
X_2(A)&=&A\begin{pmatrix}
0 & 1\\
1 & 0
\end{pmatrix}=\begin{pmatrix}
b & a\\
d & c
\end{pmatrix},\\
X_3(A)&=&A\begin{pmatrix}
0 & 1\\
-1 & 0
\end{pmatrix}=\begin{pmatrix}
-b & a\\
-d & c
\end{pmatrix},
\end{eqnarray*}
form an orthogonal (semi-orthonormal) basis of $T_ASL(2,\mathbb{R})$ such that
$$
\left\langle X_1,X_1\right\rangle =1,\quad \left\langle X_2,X_2\right\rangle =1\quad {\rm and}\ \left\langle X_3,X_3\right\rangle =-1. 
$$
Hence, the tangent space of $SL(2,\mathbb{R})$ can be defined as 
$$
T_ASL(2,\mathbb{R})=\left\{A\alpha\ |\ \alpha\ {\rm is\ a\ } 2\times 2\ {\rm matrix\ of\ trace\ 0}\right\}.
$$
Consequently, the vector fields
\begin{align*}
E_1(A,B) &=\left(A\begin{pmatrix}
1 & 0\\
0 & -1
\end{pmatrix},0\right),& F_1(A,B)&=\left(0,B\begin{pmatrix}
1 & 0\\
0 & -1
\end{pmatrix}\right),\\
E_2(A,B) &=\left(A\begin{pmatrix}
0 & 1\\
1 & 0
\end{pmatrix},0\right), &F_2(A,B)&=\left(0,B\begin{pmatrix}
0 & 1\\
1 & 0
\end{pmatrix}\right),\\
E_3(A,B) &=\left(A\begin{pmatrix}
0 & 1\\
-1 & 0
\end{pmatrix},0\right),& F_3(A,B)&=\left(0,B\begin{pmatrix}
0 & 1\\
-1 & 0
\end{pmatrix}\right),
\end{align*}
are mutually orthogonal with respect to the usual product metric on $SL(2,\mathbb{R})\times SL(2,\mathbb{R})$ given by:
$$
\left\langle Z,Z'\right\rangle_{SL(2,\mathbb{R})\times SL(2,\mathbb{R})}=\left\langle U,U'\right\rangle+\left\langle V,V'\right\rangle
$$
for $Z=(U,V), Z'=(U',V')$ tangential to $SL(2,\mathbb{R})\times SL(2,\mathbb{R})\,.$
We note that the usual product metric $\left\langle .,.\right\rangle_{SL(2,\mathbb{R})\times SL(2,\mathbb{R})}$ will also be denoted by $\left\langle .,.\right\rangle\,.$
Hence, the lie brackets are
\begin{align*}
[E_1,E_2]&=2E_3\qquad [F_1,F_2]=2F_3\qquad [E_i,E_i]=0&\\
[E_1,E_3]&=2E_2\qquad [F_1,F_3]=2F_2\qquad [E_i,F_j]=0&\\
[E_2,E_3]&=-2E_3\ \quad [F_2,F_3]=-2F_1\ \quad [F_i,F_i]=0&\\
\end{align*}

\vspace{-0.7cm}

for $i,j\in\{1,2,3\}\,.$

\noindent Now, we define the almost complex structure $J$ on $SL(2,\mathbb{R})\times SL(2,\mathbb{R})$ by 
\begin{eqnarray*}
JE_i&=&\frac{1}{\sqrt{3}}\left(2F_i+E_i\right),\\
JF_i&=&-\frac{1}{\sqrt{3}}\left(E_i+F_i\right)\,,
\end{eqnarray*}
or more generally by
\begin{equation*}
J(A\alpha,B\beta)=\frac{1}{\sqrt{3}}\left(A(\alpha-2\beta), B(2\alpha-\beta)\right)\,,
\end{equation*}
for $\alpha,\beta$ 2-dimensional matrices of trace 0 and therefore, $\displaystyle{(A\alpha,B\beta)\in T_{(A,B)}(SL(2,\mathbb{R})\times SL(2,\mathbb{R}))}\,.$\\

\noindent The metric on $SL(2,\mathbb{R})\times SL(2,\mathbb{R})$ which corresponds to the almost complex structure $J$ is the metric $g$ given as follows:
\begin{align*}
g(E_i,F_j)=\left\{\begin{aligned}
-\frac{1}{3}\delta_{ij}&\quad {\rm for}&\ i&=&1,2,\\
\frac{1}{3}\delta_{ij} &\quad {\rm for}&\ i&=&3,
\end{aligned}
\right.
\end{align*}
\begin{align*}
g(E_i,E_j)=g(F_i,F_j)=\left\{\begin{aligned}
\frac{2}{3}\delta_{ij}&\quad {\rm for}&\ i&=&1,2,\\
-\frac{2}{3}\delta_{ij} &\quad {\rm for}&\ i&=&3\,.
\end{aligned}
\right.
\end{align*}
More generally, on arbitrary tangent vectors in $T_{(A,B)}(SL(2,\mathbb{R})\times SL(2,\mathbb{R}))$, the metric $g$ is given in function of the usual product metric on $SL(2,\mathbb{R})\times SL(2,\mathbb{R})$ by
\begin{equation}\label{g in function of <,> explicitly}
g((A\alpha,B\beta),(A\gamma,B\delta))=\frac{2}{3}\left\langle(A\alpha,B\beta),(A\gamma,B\delta)\right\rangle-\frac{1}{3}\left\langle(A\beta,B\alpha),(A\gamma,B\delta)\right\rangle
\end{equation}
for any $\alpha,\beta,\gamma,\delta\in M(2,\mathbb{R})$ of trace 0.

\noindent We can check that the almost complex structure $J$ is compatible with the metric $g\,.$\\

Let us now consider the following Lemma that will be used through the paper.
\begin{lemma}
\label{lemma L-C connetion and its derivative}
The Levi-Civita connection $\widetilde{\nabla}$ on $\mathrm{SL(2,\mathbb{R})}\times\mathrm{SL(2,\mathbb{R})}$ with respect to the metric $g$ is given by

\small{
\begin{align*}
\widetilde{\nabla}_{E_1}E_1&=0, & \widetilde{\nabla}_{E_2}E_1&=-E_3, & \widetilde{\nabla}_{E_3}E_1&=-E_2, \\
\widetilde{\nabla}_{E_1}E_2&=E_3,& \widetilde{\nabla}_{E_2}E_2&=0, & \widetilde{\nabla}_{E_3}E_2&=E_1, \\
\widetilde{\nabla}_{E_1}E_3&=E_2, & \widetilde{\nabla}_{E_2}E_3&=-E_1, & \widetilde{\nabla}_{E_3}E_3&=0,\\
\widetilde{\nabla}_{E_1}F_1&=0, & \widetilde{\nabla}_{E_2}F_1&=\tfrac{1}{3}(E_3-F_3), & \widetilde{\nabla}_{E_3}F_1&=\tfrac{1}{3}(E_2-F_2), \\
\widetilde{\nabla}_{E_1}F_2&=\tfrac{1}{3}(-E_3+F_3), & \widetilde{\nabla}_{E_2}F_2&=0,& \widetilde{\nabla}_{E_3}F_2&=\tfrac{1}{3}(-E_1+F_1), \\
\widetilde{\nabla}_{E_1}F_3&=\tfrac{1}{3}(-E_2+F_2), & \widetilde{\nabla}_{E_2}F_3&=\tfrac{1}{3}(E_1-F_1), & \widetilde{\nabla}_{E_3}F_3&=0,\\
\widetilde{\nabla}_{F_1}E_1&=0, & \widetilde{\nabla}_{F_2}E_1&=\tfrac{1}{3}(-E_3+F_3), & \widetilde{\nabla}_{F_3}E_1&=-\tfrac{1}{3}(E_2-F_2), \\
\widetilde{\nabla}_{F_1}E_2&=\tfrac{1}{3}(E_3-F_3), & \widetilde{\nabla}_{F_2}E_2&=0, & \widetilde{\nabla}_{F_3}E_2&=\tfrac{1}{3}(E_1-F_1), \\
\widetilde{\nabla}_{F_1}E_3&=\tfrac{1}{3}(E_2-F_2), & \widetilde{\nabla}_{F_2}E_3&=-\tfrac{1}{3}(E_1-F_1), & \widetilde{\nabla}_{F_3}E_3&=0,\\
\widetilde{\nabla}_{F_1}F_1&=0, & \widetilde{\nabla}_{F_2}F_1&=-F_3, & \widetilde{\nabla}_{F_3}F_1&=-F_2, \\
\widetilde{\nabla}_{F_1}F_2&=F_3, & \widetilde{\nabla}_{F_2}F_2&=0, & \widetilde{\nabla}_{F_3}F_2&=F_1, \\
\widetilde{\nabla}_{F_1}F_3&=F_2, & \widetilde{\nabla}_{F_2}F_3&=-F_1, & \widetilde{\nabla}_{F_3}F_3&=0\,.
\end{align*}
}
Then its covariant derivative, $\widetilde{\nabla}J$ can be computed as follows 
\small{
\begin{align*}
(\widetilde{\nabla}_{E_1}J)E_1&=0,& (\widetilde{\nabla}_{E_2}J)E_1&=\tfrac{2}{3\sqrt{3}}(E_3+2F_3), & (\widetilde{\nabla}_{E_3}J)E_1&=\tfrac{2}{3\sqrt{3}}(E_2+2F_2), \\
(\widetilde{\nabla}_{E_1}J)E_2&=-\tfrac{2}{3\sqrt{3}}(E_3+2F_3), & (\widetilde{\nabla}_{E_2}J)E_2&=0, & (\widetilde{\nabla}_{E_3}J)E_2&=-\tfrac{2}{3\sqrt{3}}(E_1+2F_1), \\
(\widetilde{\nabla}_{E_1}J)E_3&=-\tfrac{2}{3\sqrt{3}}(E_2+2F_2), & (\widetilde{\nabla}_{E_2}J)E_3&=\tfrac{2}{3\sqrt{3}}(E_1+2F_1), & (\widetilde{\nabla}_{E_3}J)E_3&=0,\\
(\widetilde{\nabla}_{E_1}J)F_1&=, & (\widetilde{\nabla}_{E_2}J)F_1&=\tfrac{2}{3\sqrt{3}}(E_3-F_3), & (\widetilde{\nabla}_{E_3}J)F_1&=\tfrac{2}{3\sqrt{3}}(E_2-F_2), \\
(\widetilde{\nabla}_{E_1}J)F_2&=\tfrac{2}{3\sqrt{3}}(-E_3+F_3),  & (\widetilde{\nabla}_{E_2}J)F_2&=0, & (\widetilde{\nabla}_{E_3}J)F_2&=\tfrac{2}{3\sqrt{3}}(-E_1+F_1), \\
(\widetilde{\nabla}_{E_1}J)F_3&=\tfrac{2}{3\sqrt{3}}(-E_2+F_2), & (\widetilde{\nabla}_{E_2}J)F_3&=\tfrac{2}{3\sqrt{3}}(E_1-F_1), & (\widetilde{\nabla}_{E_3}J)F_3&=0,\\
(\widetilde{\nabla}_{F_1}J)E_1&=0, & (\widetilde{\nabla}_{F_2}J)E_1&=\tfrac{2}{3\sqrt{3}}(E_3-F_3), & (\widetilde{\nabla}_{F_3}J)E_1&=\tfrac{2}{3\sqrt{3}}(E_2-F_2), \\
(\widetilde{\nabla}_{F_1}J)E_2&=-\tfrac{2}{3\sqrt{3}}(E_3-F_3), & (\widetilde{\nabla}_{F_2}J)E_2&=0, & (\widetilde{\nabla}_{F_3}J)E_2&=-\tfrac{2}{3\sqrt{3}}(E_1-F_1), \\
(\widetilde{\nabla}_{F_1}J)E_3&=-\tfrac{2}{3\sqrt{3}}(E_2-F_2), & (\widetilde{\nabla}_{F_2}J)E_3&=\tfrac{2}{3\sqrt{3}}(E_1-F_1), & (\widetilde{\nabla}_{F_3}J)E_3&=0, \\
(\widetilde{\nabla}_{F_1}J)F_1&=0, & (\widetilde{\nabla}_{F_2}J)F_1&=-\tfrac{2}{3\sqrt{3}}(2E_3+F_3), & (\widetilde{\nabla}_{F_3}J)F_1&=-\tfrac{2}{3\sqrt{3}}(2E_2+F_2), \\
(\widetilde{\nabla}_{F_1}J)F_2&=\tfrac{2}{3\sqrt{3}}(2E_3+F_3), & (\widetilde{\nabla}_{F_2}J)F_2&=0 ,& (\widetilde{\nabla}_{F_3}J)F_2&=\tfrac{2}{3\sqrt{3}}(2E_1+F_1), \\
(\widetilde{\nabla}_{F_1}J)F_3&=\tfrac{2}{3\sqrt{3}}(2E_2+F_2), & (\widetilde{\nabla}_{F_2}J)F_3&=-\tfrac{2}{3\sqrt{3}}(2E_1+F_1), & (\widetilde{\nabla}_{F_3}J)F_3&=0\,.
\end{align*}
}
\end{lemma}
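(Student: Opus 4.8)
The plan is to read $\widetilde{\nabla}$ off the Koszul formula and then obtain $\widetilde{\nabla}J$ from it by a direct substitution. The six fields $E_1,E_2,E_3,F_1,F_2,F_3$ are left-invariant on the Lie group $\nks$, and $g$ has constant coefficients in this frame (the displayed tables for $g(E_i,E_j)$, $g(F_i,F_j)$, $g(E_i,F_j)$). Hence the three directional-derivative terms in the Koszul formula vanish, and for any two frame fields $X,Y$ one has
\begin{equation*}
2\,g(\widetilde{\nabla}_X Y,Z)=g([X,Y],Z)-g([X,Z],Y)-g([Y,Z],X)
\end{equation*}
for every frame field $Z$. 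So the first step is to evaluate the right-hand side for all ordered pairs $(X,Y)$: an entirely algebraic task using only the bracket relations and the constant values of $g$.

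To recover $\widetilde{\nabla}_X Y$ itself from the scalars $g(\widetilde{\nabla}_X Y,Z)$, I would expand $\widetilde{\nabla}_X Y$ in the frame and invert the Gram matrix of $g$. Since $g$ pairs $E_i$ only with $E_i$ and $F_i$, and $F_i$ only with $F_i$ and $E_i$ (never changing the subscript), this Gram matrix is block diagonal, a direct sum of three invertible $2\times2$ blocks, so each covariant derivative is obtained from three tiny linear systems. The number of genuinely distinct cases is small: the involution exchanging the two factors, $E_i\leftrightarrow F_i$, is an isometry of $g$ that preserves all brackets, hence commutes with $\widetilde{\nabla}$ --- this is precisely the symmetry one sees between the $E$-rows and the $F$-rows of the first table --- and one can also use the relabelling symmetries of the index set $\{1,2,3\}$ (with the signs forced by $\langle X_3,X_3\rangle=-1$). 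As a representative computation, for $X=E_2$, $Y=F_1$ the formula reads $2g(\widetilde{\nabla}_{E_2}F_1,Z)=-g([E_2,Z],F_1)-g([F_1,Z],E_2)$ because $[E_2,F_1]=0$; letting $Z$ run through the frame gives $g(\widetilde{\nabla}_{E_2}F_1,E_3)=-\tfrac{1}{3}$, $g(\widetilde{\nabla}_{E_2}F_1,F_3)=\tfrac{1}{3}$ and $0$ against $E_1,F_1,E_2,F_2$, and solving the $2\times2$ system in the index-$3$ block yields $\widetilde{\nabla}_{E_2}F_1=\tfrac{1}{3}(E_3-F_3)$, as stated. One can sanity-check the whole table against the identities $\widetilde{\nabla}_{E_i}E_j=\tfrac12[E_i,E_j]$ and $\widetilde{\nabla}_{F_i}F_j=\tfrac12[F_i,F_j]$, which hold because $\langle\cdot,\cdot\rangle$ restricts to a bi-invariant metric on each factor.

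With the first table in hand, the computation of $\widetilde{\nabla}J$ is mechanical. For frame fields $X,Y$,
\begin{equation*}
(\widetilde{\nabla}_X J)Y=\widetilde{\nabla}_X(JY)-J\bigl(\widetilde{\nabla}_X Y\bigr),
\end{equation*}
and since $JE_i=\tfrac{1}{\sqrt{3}}(2F_i+E_i)$ and $JF_i=-\tfrac{1}{\sqrt{3}}(E_i+F_i)$ are constant combinations of the frame, one merely substitutes the first table into this identity and collects terms. For instance $(\widetilde{\nabla}_{E_2}J)E_1=\tfrac{1}{\sqrt{3}}\bigl(2\widetilde{\nabla}_{E_2}F_1+\widetilde{\nabla}_{E_2}E_1\bigr)+JE_3=\tfrac{1}{\sqrt{3}}\bigl(\tfrac{2}{3}(E_3-F_3)-E_3\bigr)+\tfrac{1}{\sqrt{3}}(2F_3+E_3)=\tfrac{2}{3\sqrt{3}}(E_3+2F_3)$, matching the second table. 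I would finish by checking on the resulting tables that $G(X,Y):=(\widetilde{\nabla}_X J)Y$ satisfies all the identities \eqref{TensorG} (in particular $G(X,X)=0$), which confirms that $(g,J)$ is nearly K\"ahler and simultaneously guards against arithmetic slips.

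There is no conceptual obstacle here; the difficulty is purely bookkeeping. The indefinite signature --- the minus signs attached to the index-$3$ directions and to the $E$-$F$ cross terms --- makes sign errors easy, so it pays to set up the factor-swap and index-relabelling symmetries carefully before grinding through the $36$ connection entries and the $36$ entries of $\widetilde{\nabla}J$.
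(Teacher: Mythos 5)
Your proposal is correct and is exactly the computation the paper leaves implicit: in the left-invariant frame the coefficients of $g$ are constant, so the Koszul formula reduces to the three bracket terms, the block-diagonal Gram matrix (only index-matching $E_i,F_i$ pairs couple) lets you solve a $2\times2$ system for each $\widetilde{\nabla}_XY$, and $\widetilde{\nabla}J$ then follows mechanically from $(\widetilde{\nabla}_XJ)Y=\widetilde{\nabla}_X(JY)-J\widetilde{\nabla}_XY$; your sample entries $\widetilde{\nabla}_{E_2}F_1=\tfrac13(E_3-F_3)$ and $(\widetilde{\nabla}_{E_2}J)E_1=\tfrac{2}{3\sqrt3}(E_3+2F_3)$ check out, and your computation implicitly uses the correct bracket $[E_2,E_3]=-2E_1$ (the paper's list has a typo there). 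One minor caveat: bi-invariance of $\langle\cdot,\cdot\rangle$ on each factor directly gives $\nabla^E_{E_i}E_j=\tfrac12[E_i,E_j]$ only for the product-metric connection, and transferring this to $\widetilde{\nabla}$ requires the additional observation that the difference tensor of Lemma \ref{lemma relation between connections} vanishes on same-factor pairs — but since you invoke this identity only as a sanity check and not in the derivation, it does not affect the validity of the proof.
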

\noindent Let us set $G:=\widetilde{\nabla}$. Then, $G$ is skew symmetric and satisfies the following equations
\begin{equation*}
G(X,JY)+JG(X,Y)=0\,, \quad g(G(X,Y),Z)+g(G(X,Z),Y)=0\,,
\end{equation*}
for any vector fields $X,Y,Z\in T(SL(2,\mathbb{R})\times SL(2,\mathbb{R}))\,.$ Therefore, $SL(2,\mathbb{R})\times SL(2,\mathbb{R})$ endowed with $g$ and $J$, becomes a nearly K\"{a}hler  manifold.

\noindent In order to express the curvature tensor of the nearly K\"{a}hler $SL(2,\mathbb{R})\times SL(2,\mathbb{R})$, it is convenient to introduce an almost product structure $P$ on which is defined as
\begin{eqnarray}
P(pU,qV)=(pV,qU),\quad \forall Z=(pU,qV)\in T_{(p,q)}(SL(2,\mathbb{R})\times SL(2,\mathbb{R})).
\end{eqnarray}
We can verify easily that $P$ satisfies the following properties:
\begin{align*}
P^2 &=Id\,,\\
PJ&=-JP\,,\\
g(PZ,PZ')&=g(Z,Z'),\ {\rm i.e.}\ P\  {\rm is\ compatible\ with\ } g\,,\\
g(PZ,Z')&=g(Z,PZ'),\ {\rm i.e.}\ P\ {\rm is\ symmetric\,.}  
\end{align*}
It then turns out that the Riemann curvature tensor $\widetilde{R}$ on $(SL(2,\mathbb{R}))\times SL(2,\mathbb{R}),g)$ is given by
\begin{eqnarray*}
\widetilde{R}(U,V)W&=&-\tfrac{5}{6}\left(g(V,W)U-g(U,W)V\right)\\\notag
 & &-\tfrac{1}{6}\left(g(JV,W)JU-g(JU,W)JV-2g(JU,V)JW\right)\\
 & &-\tfrac{2}{3}\left(g(PV,W)PU-g(PU,W)PV+g(JPV,W)JPU-g(JPU,W)JPV\right)\,,
\end{eqnarray*}
and the tensors $\widetilde{\nabla}G$ and $G$ satisfy
\begin{eqnarray}
(\widetilde{\nabla}G)(X,Y,Z)&=&-\tfrac{2}{3}\left(g(X,Z)JY-g(X,Y)JZ-g(JY,Z)X\right)\,,\\ 
g(G(X,Y),G(Z,W))&=&-\tfrac{2}{3}\left(g(X,Z)g(Y,W)-g(X,W)g(Y,Z)\right.\label{g(G,G)}\\\notag
& &+\left. g(JX,Z)g(JW,Y)-g(JX,W)g(JZ,Y)\right)
\end{eqnarray}
From \eqref{g(G,G)} and by using the fact that $g(G(X,Y),Z)=-g(G(X,Z),Y)$ we deduce the following equation
\begin{eqnarray}\label{G(X,G(Z,W))}
G(X,G(Z,W))&=&\tfrac{2}{3}\left(g(X,Z)W-g(X,W)Z+g(JX,Z)JW\right.\\\notag
& &\quad\left. -g(JX,W)JZ\right).
\end{eqnarray}

\noindent Moreover, we can express the tensor $G$ explicitly for any tangent vectors fields. For that aim, let us present the following proposition.
\begin{proposition}\label{proposition G explicity}
Let $X=(A\alpha,B\beta), Y=(A\gamma,B\delta)\in T_{(A,B)}(SL(2,\mathbb{R})\times SL(2,\mathbb{R}))\,.$ Then,
$$
G(X,Y)=\tfrac{2}{3\sqrt{3}}\left(A(-\alpha\times\gamma-\alpha\times\delta+\gamma\times\beta+2\beta\times\delta),B(-2\alpha\times\gamma+\alpha\times\delta-\gamma\times\beta+\beta\times\delta)\right)
$$
where $\times$ is a product similar to the product vector, that we define on the space of real matrices of dimension $2$ and  trace $0$ by $\alpha\times\beta=\tfrac{1}{2}(\alpha\beta-\beta\alpha)\,.$
\end{proposition}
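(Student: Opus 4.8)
The plan is to verify the claimed formula by direct computation against the definition $G(X,Y)=(\widetilde\nabla_X J)Y=\widetilde\nabla_X(JY)-J\widetilde\nabla_X Y$, exploiting that $G$ is tensorial (bilinear over functions) so it suffices to check it on the frame $\{E_i,F_j\}$. First I would translate the target formula into frame language: writing $X=(A\alpha,B\beta)$ in the frame means expressing $\alpha,\beta$ in the basis $\{\operatorname{diag}(1,-1),\left(\begin{smallmatrix}0&1\\1&0\end{smallmatrix}\right),\left(\begin{smallmatrix}0&1\\-1&0\end{smallmatrix}\right)\}$ of trace-zero matrices, so that $E_i\leftrightarrow(Ae_i,0)$ and $F_j\leftrightarrow(0,Be_j)$ with $e_1,e_2,e_3$ the three basis matrices. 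The key algebraic input is the multiplication table of the $\times$-product on these generators: a short computation gives $e_1\times e_2=-e_3$, $e_1\times e_3=-e_2$, $e_2\times e_3=e_1$ (with antisymmetry $e_i\times e_j=-e_j\times e_i$ and $e_i\times e_i=0$), which mirrors the Lie-bracket table in the excerpt up to the factor $2$ (indeed $[E_1,E_2]=2E_3$ corresponds to $e_1\times e_2$ carrying the opposite sign because $\widetilde\nabla$ is not the flat connection — I must be careful with signs here).

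Next I would expand the right-hand side of the proposition on each pair of frame vectors and compare with the $(\widetilde\nabla J)$ table already established in Lemma~\ref{lemma L-C connetion and its derivative}. For instance, taking $X=E_2$, $Y=E_1$ means $\alpha=e_2$, $\gamma=e_1$, $\beta=\delta=0$, so the formula predicts $G(E_2,E_1)=\tfrac{2}{3\sqrt3}(A(-e_2\times e_1),B(-2e_2\times e_1))=\tfrac{2}{3\sqrt3}(A e_3,2Be_3)$; in frame terms this is $\tfrac{2}{3\sqrt3}(E_3+2F_3)$, which matches $(\widetilde\nabla_{E_2}J)E_1$ from the Lemma. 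One runs through the essentially $6\times 6$ grid of cases — but by bilinearity and the antisymmetry $G(X,Y)=-G(Y,X)$ together with the splitting of $X,Y$ into $E$- and $F$-parts, this reduces to the three blocks ($EE$, $EF$, $FF$) and within each only a handful of independent entries because of cyclic symmetry in the indices $1,2,3$.

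An alternative, slightly cleaner route I would keep in reserve: rather than checking all frame entries, derive the formula structurally. Any $SL(2,\RR)\times SL(2,\RR)$-invariant skew bilinear tensor $T(X,Y)$ built from $\alpha,\beta,\gamma,\delta$ and taking values in trace-zero pairs must be a linear combination of the six terms $A(\alpha\times\gamma)$, $A(\alpha\times\delta)$, $A(\beta\times\gamma)$, $A(\beta\times\delta)$ in the first slot (and similarly four in the second), since $\times$ is the only available equivariant product; skew-symmetry in $X\leftrightarrow Y$ kills or pairs up terms, leaving a small number of free constants. Then one pins down the constants by evaluating on two or three well-chosen vectors (e.g. $E_1,E_2$ and $F_1,F_2$ and $E_1,F_2$) and reading off from the Lemma. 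This also makes the appearance of the overall factor $\tfrac{2}{3\sqrt3}$ transparent, matching the $\tfrac{1}{\sqrt3}$ in the definition of $J$ and the $\tfrac{2}{3}$ scale of the metric.

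\textbf{Main obstacle.} The real work is bookkeeping: getting every sign in the $\times$-multiplication table right and consistently reconciling the two sign conventions (the Lie bracket table has a factor $2$ and, for the mixed $[E_i,F_j]=0$ entries, the connection still has nonzero $EF$-components coming from the nearly Kähler deformation of the metric). I expect the $F_iF_j$ and mixed $E_iF_j$ blocks of the verification to be where errors creep in, because there the $\tfrac13(E-F)$-type corrections in $\widetilde\nabla$ interact with the $\tfrac{1}{\sqrt3}(2F+E)$-form of $J$; carefully organizing the computation as $\widetilde\nabla_X(JY)-J(\widetilde\nabla_X Y)$ using the already-tabulated connection, rather than recomputing from scratch, is what makes it manageable.
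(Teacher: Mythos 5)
Your overall strategy is the paper's own: split $X,Y$ into their $E$- and $F$-parts, compute $G$ on the frame from the connection tables of Lemma~\ref{lemma L-C connetion and its derivative}, and extend by bilinearity (the paper organizes this as $X=U_\alpha+V_\beta$, $Y=U_\gamma+V_\delta$, computes $G(U_\alpha,U_\gamma)$ and $G(U_\alpha,V_\delta)$ from the Lemma, gets the $F$--$F$ block from the $E$--$E$ block via $PU_\alpha=V_\alpha$, and concludes by linearity). So there is no difference of method worth noting; the one genuine problem is your stated $\times$-multiplication table, whose three signs are all wrong. Directly, $e_1e_2=e_3=-e_2e_1$, $e_1e_3=e_2=-e_3e_1$, $e_2e_3=-e_1=-e_3e_2$, so $e_1\times e_2=e_3$, $e_1\times e_3=e_2$, $e_2\times e_3=-e_1$: the $\times$-table reproduces the Lie bracket table with the \emph{same} signs, only without the factor $2$ (the paper's entry ``$[E_2,E_3]=-2E_3$'' is a typo for $-2E_1$). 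Your justification that the sign should flip ``because $\widetilde\nabla$ is not the flat connection'' is not tenable: $\times$ is defined purely algebraically on trace-zero matrices and is independent of any connection. Indeed your own test case silently uses the correct convention, since $-e_2\times e_1=e_3$ forces $e_1\times e_2=e_3$; with your stated table the same computation would give $G(E_2,E_1)=-\tfrac{2}{3\sqrt3}(E_3+2F_3)$ and the check against the Lemma would fail everywhere. Once the table is corrected, your frame-by-frame verification (or the invariance argument you keep in reserve, which still needs a few frame evaluations to fix the constants) goes through and reproduces the stated formula, so this is a fixable sign slip rather than a structural gap.
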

\begin{proof}
Let $\alpha_1,\alpha_2,\alpha_3$ be the coefficients of $\alpha$ in the basis {\footnotesize $\left\{\begin{pmatrix}
1& 0\\
0& -1
\end{pmatrix}, \begin{pmatrix}
0&1\\
1&0
\end{pmatrix}, \begin{pmatrix}
0&1\\
-1&0
\end{pmatrix}\right\}
$}, similarly for $\beta\,.$ Then, we write $$X=U_{\alpha}+V_\beta$$ where, $U_{\alpha}=\alpha_1 E_1+\alpha_2 E_2+\alpha_3 E_3$ and $V_{\beta}=\beta_1F_1+\beta_2 F_2+\beta_3 F_3$. Similarly, $$Y=U_{\gamma}+V_{\delta}\,.$$ By using Lemma \ref{lemma L-C connetion and its derivative}, we can compute 
$$
G(U_{\alpha},V_{\beta})=-\tfrac{2}{3\sqrt{3}}(U_{\alpha\times\beta}-V_{\alpha\times\beta}),\quad 
G(U_{\alpha},U_{\beta})=-\tfrac{2}{3\sqrt{3}}(U_{\alpha\times\beta}+2V_{\alpha\times\beta}).
$$
As $PU_{\alpha}=V_{\alpha},$ we obtain $$G(V_{\alpha},V_{\beta})=\tfrac{2}{3\sqrt{3}}(V_{\alpha\times\beta}+2U_{\alpha\times\beta})\,.$$
Then, by linearity we complete the proof of the proposition.
\end{proof}
Moreover, the almost product structure $P$ can be expressed in terms of the usual product structure $Q$ given by $QZ=Q(A\alpha,B\beta)=(-A\alpha,B\beta)$ and vice versa:
\begin{align}
QZ&=-\frac{1}{\sqrt{3}}(2PJZ-JZ)\,,\label{Q}\\
PZ&=-\frac{1}{2}(Z+\sqrt{3}JQZ)\,.
\end{align}
The metric $g$ is expressed in terms of the metric $\langle.,.\rangle$ by:
$$
g(Z,Z')=\frac{1}{4}\left(\langle Z,Z'\rangle +\langle JZ,JZ'\rangle\right).
$$
Then,
$$
g(QZ,QZ')+g(Z,Z')=\frac{4}{3}\langle Z,Z'\rangle\,,
$$
so that the metric $\langle.,.\rangle$ can be written in terms of the metric $g$:
\begin{equation}\label{<,> in function of g}
\langle Z,Z'\rangle=2g(Z,Z')+g(Z,PZ').
\end{equation}
Let us end this section with the following.
\begin{lemma}\label{lemma relation between connections}
The relation between the Levi-Civita connection $\widetilde{\nabla}$ of the metric $g$ and that of the usual product metric $\langle.,.\rangle$, denoted $\nabla^E$, is

$$
\nabla^E_XY=\widetilde{\nabla}_XY+\frac{1}{2}\left(JG(X,PY)+JG(Y,PX)\right).
$$
\end{lemma}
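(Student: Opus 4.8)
The plan is to derive the stated formula by a direct comparison of the two Levi-Civita connections using the Koszul formula, exploiting the explicit relationship \eqref{<,> in function of g} between the two metrics, namely $\langle Z,Z'\rangle = 2g(Z,Z') + g(Z,PZ')$. First I would write the Koszul formula for $\nabla^E$ with respect to $\langle.,.\rangle$, applied to the left-invariant frame $\{E_i,F_i\}$ (on which all brackets and the metric coefficients are constant), so that only the three cyclic ``$X\langle Y,Z\rangle$'' terms and the three bracket terms survive; the same for $\widetilde\nabla$ with respect to $g$. Substituting $\langle.,.\rangle = 2g(.,.) + g(.,P.)$ into the Koszul expression for $\nabla^E$ splits it into twice the Koszul expression for $\widetilde\nabla$ (giving $2\widetilde\nabla_X Y$ after accounting for the factor) plus a correction term built entirely from $g(.,P.)$ and the brackets. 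The task then reduces to identifying that correction term with $\tfrac12\bigl(JG(X,PY)+JG(Y,PX)\bigr)$.

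Alternatively — and I expect this to be the cleaner route — since both connections are metric-compatible (with their respective metrics) and torsion-free, the difference tensor $D(X,Y) := \nabla^E_X Y - \widetilde\nabla_X Y$ is symmetric in $X,Y$, and it is enough to check the claimed identity for $D$ on all pairs drawn from the basis $\{E_1,E_2,E_3,F_1,F_2,F_3\}$. On the left-hand side, $\nabla^E$ is the product connection, which on the factors $SL(2,\RR)$ is the bi-invariant-type connection: using $\nabla^E_{X_i}X_j = \tfrac12[X_i,X_j]$ on each $SL(2,\RR)$-factor (the Levi-Civita connection of the metric coming from the Killing-type form $\langle.,.\rangle$), one reads off $\nabla^E_{E_i}E_j$, $\nabla^E_{F_i}F_j$ from the bracket table and $\nabla^E_{E_i}F_j = \nabla^E_{F_i}E_j = 0$. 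Subtracting the values of $\widetilde\nabla$ tabulated in Lemma \ref{lemma L-C connetion and its derivative} gives $D$ explicitly on the frame. On the right-hand side, I would compute $P E_i$, $P F_i$ from the definition of $P$ (namely $PE_i = F_i$, $PF_i = E_i$), then $G(E_i,PE_j)=G(E_i,F_j)$, etc., using the formulas for $\widetilde\nabla J = G$ in Lemma \ref{lemma L-C connetion and its derivative} (or, more efficiently, Proposition \ref{proposition G explicity} together with the cross-product table $E_i\times E_j$), apply $J$ via its definition on the frame, symmetrize, and match.

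The main obstacle is purely bookkeeping: one must be careful that $\widetilde\nabla$ here denotes the Levi-Civita connection of the \emph{nearly Kähler} metric $g$, not of $\langle.,.\rangle$, and that the mixed terms $\widetilde\nabla_{E_i}F_j$ are nonzero (they are the $\tfrac13(\cdot - \cdot)$ entries in the lemma) whereas $\nabla^E_{E_i}F_j=0$; so the difference tensor is genuinely nonzero on mixed pairs and must be produced entirely by the correction term $\tfrac12(JG(X,PY)+JG(Y,PX))$, which indeed has nonzero mixed components since $P$ swaps the two factors. I would organize the verification by the three cases $X,Y\in\{E_i\}$, $X,Y\in\{F_i\}$, and $X\in\{E_i\},Y\in\{F_j\}$; symmetry of $D$ and the skew-symmetry and $J$-anti-invariance of $G$ (equation \eqref{TensorG}) cut the number of independent checks down substantially, and in each case both sides are visibly proportional to a single frame vector, so the match is immediate once the coefficients are computed. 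A slicker finish, avoiding case-work, would be to verify directly that $Y\mapsto \widetilde\nabla_X Y + \tfrac12(JG(X,PY)+JG(Y,PX))$ is symmetric in $X,Y$ (using symmetry of $\widetilde\nabla$'s torsion and skew-symmetry of $G$) and compatible with $\langle.,.\rangle$ (differentiating \eqref{<,> in function of g} and using $\widetilde\nabla g = 0$, $\widetilde\nabla P = $ the known expression, and the algebraic identities relating $G$, $J$, $P$), whence it must equal $\nabla^E$ by uniqueness of the Levi-Civita connection.
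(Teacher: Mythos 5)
The paper states this lemma without any proof, so there is nothing of the authors' to compare against; judged on its own, your proposal is correct and would fill that gap. The decisive route is your frame verification: both connections are torsion free, the correction term $\tfrac12\left(JG(X,PY)+JG(Y,PX)\right)$ is visibly symmetric, and $\nabla^E$ is the product of the bi-invariant connections, so $\nabla^E_{E_i}E_j=\tfrac12[E_i,E_j]$, $\nabla^E_{F_i}F_j=\tfrac12[F_i,F_j]$ and the mixed derivatives vanish. Comparing with the table of $\widetilde{\nabla}$ in Lemma \ref{lemma L-C connetion and its derivative}, the correction is zero on pure pairs (where $\widetilde{\nabla}$ already equals half the bracket) and on mixed pairs it must cancel the $\tfrac13(\cdot-\cdot)$ terms; e.g. $\widetilde{\nabla}_{E_2}F_1=\tfrac13(E_3-F_3)$ is cancelled by $\tfrac12\left(JG(E_2,E_1)+JG(F_1,F_2)\right)=\tfrac13(F_3-E_3)$, as required. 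One caution: do this check with $J$ taken from the general formula $J(A\alpha,B\beta)=\tfrac1{\sqrt3}\left(A(\alpha-2\beta),B(2\alpha-\beta)\right)$, i.e. $JF_i=-\tfrac1{\sqrt3}(2E_i+F_i)$; the displayed $JF_i=-\tfrac1{\sqrt3}(E_i+F_i)$ in the paper is a typo (it violates $J^2=-\mathrm{Id}$), and if you use it literally the frame computation will not close. Two smaller remarks: in your Koszul variant the terms $X\langle Y,Z\rangle$ in fact vanish on the left-invariant frame (all metric coefficients are constant there), they do not "survive"; and your uniqueness-of-Levi-Civita finish needs an explicit formula for $\widetilde{\nabla}P$, which this paper never writes down and only inherits implicitly from Lemma 3.1 of \cite{Bo-Di-Di-Vr}, so as written that variant has an unproved ingredient, whereas the frame verification is self-contained and complete.
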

\section{Almost complex surfaces in SL(2,$\mathbb{R}$)$\times$SL(2,$\mathbb{R}$)}

In this section, we start by studying almost complex surfaces in $SL(2,\mathbb{R})\times SL(2,\mathbb{R})$ which are totally geodesic. The reasoning in some cases is similar to that applied in \cite{Bo-Di-Di-Vr} in the case of the nearly K\"{a}hler $S^3\times S^3\,.$ We note that the identities in Lemma 3.1. of \cite{Bo-Di-Di-Vr} remain true in our case and we will always assume that the almost complex surface is a regular surface, i.e. the induced metric is non degenerate. As $J$ is compatible with the metric this either implies that the induced metric is positive definite or negative definite. 
\begin{proposition}\label{Proposition totally geodesic}
If $M$ is a totally geodesic almost complex surface in SL(2,$\mathbb{R}$)$\times$SL(2,$\mathbb{R}$), then either
\begin{itemize}
\item[(1)] P maps the tangent space into the normal space and the Gaussian curvature $K$ is $-\tfrac{4}{3}$
\item[(2)] P preserves the tangent space (and therefore also the normal space) and the Gaussian curvature is $0$.
\end{itemize}
\end{proposition}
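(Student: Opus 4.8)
The plan is to analyze the restriction of the almost product structure $P$ to the tangent plane $T_pM$ of the almost complex surface and to use the totally geodesic hypothesis together with the Gauss equation and the explicit curvature tensor $\widetilde R$. Fix a point $p\in M$ and pick a unit tangent vector $e_1$ (with $g(e_1,e_1)=\varepsilon=\pm1$ according to the signature); then $\{e_1, Je_1\}$ is a $J$-adapted basis of $T_pM$. Since $P$ is a symmetric involution compatible with $g$ and anticommutes with $J$ (Proposition/identities in Section 3), the operator $Pe_1$ decomposes as a tangential part plus a normal part. I would write $Pe_1 = \cos\ta\, v + \sin\ta\, \xi$ (suitably interpreted in the indefinite setting) where $v\in T_pM$ is a unit vector and $\xi\in N_pM$ is a unit normal; because $PJ=-JP$, applying $P$ to $Je_1$ then gives a controlled expression as well, and $P^2=\Id$ forces strong constraints. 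The first key step is to show that, pointwise, $P$ either maps $T_pM$ entirely into $N_pM$ (the "anti-invariant" case) or maps $T_pM$ into itself (the "invariant" case); the mixed situation must be excluded. This should follow by feeding the decomposition of $Pe_1$ into $P^2e_1 = e_1$ and $g(Pe_1,e_1)=g(e_1,Pe_1)$ and using that $P\xi$ for a normal $\xi$ again splits, with the anticommutation $PJ=-JP$ linking the tangential/normal angles; the over-determined system should leave only $\ta\in\{0,\pi\}$ or $\ta=\pi/2$.

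Once the dichotomy is established at each point, a short continuity/connectedness argument shows it is the same case on all of $M$. Then I would compute the Gaussian curvature via Gauss' equation: since $M$ is totally geodesic, $h\equiv0$, so $g(R(e_1,Je_1)Je_1,e_1) = g(\widetilde R(e_1,Je_1)Je_1,e_1)$, and $K$ is this quantity divided by $g(e_1,e_1)g(Je_1,Je_1)-g(e_1,Je_1)^2 = \varepsilon^2 = 1$ (using $J$-compatibility of $g$ and $g(e_1,Je_1)=0$). Now plug $U=e_1$, $V=W=Je_1$, and then the inner product with $e_1$, into the given formula for $\widetilde R$. The terms involving $g(V,W)$, $g(JV,W)$, etc., are universal: $g(Je_1,Je_1)=\varepsilon$, $g(e_1,Je_1)=0$, $g(Je_1\,\text{paired with }Je_1)$-type terms contribute a fixed constant. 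The only case-dependent contribution comes from the $P$-terms: $g(PV,W)PU$, $g(PU,W)PV$, $g(JPV,W)JPU$, $g(JPU,W)JPV$ evaluated and then paired with $e_1$. In case (1), $P$ sends the tangent plane to the normal space, so $g(Pe_1,e_1)=g(PJe_1,e_1)=0$ and also $g(JPe_1,e_1)=-g(PJe_1,e_1)=0$, killing all four $P$-terms, and the universal part should give $K=-\tfrac43$. In case (2), $P$ preserves $T_pM$ and, being a symmetric involution there anticommuting with $J$, it must act on the $2$-plane with eigenvalues $+1,-1$; one then evaluates the four $P$-terms explicitly (they no longer vanish) and the arithmetic should collapse the expression to $K=0$.

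The main obstacle I anticipate is the pointwise dichotomy for $P|_{T_pM}$: one has to rule out the genuinely mixed angle $\ta\in(0,\pi/2)$, and the bookkeeping is a little delicate because of the indefinite signature (one cannot always normalize vectors to unit length, and $P$-eigenvectors may be null). I would handle this by working with the linear algebra of the commuting/anticommuting pair $(J,P)$ on the $6$-dimensional tangent space directly: $P$ has $\pm1$-eigenspaces $V_+,V_-$ each of real dimension $3$, interchanged by $J$ up to the relation $PJ=-JP$, and the $J$-invariant $2$-plane $T_pM$ must meet these eigenspaces in a way compatible with $J(T_pM)=T_pM$; a rank count should show $T_pM\subset V_+\oplus V_-$ splits either as a graph of an isomorphism $V_+\supset \ell \to V_-$ (forcing $P(T_pM)\perp T_pM$, case (1)) or as $(\ell_+)\oplus(\ell_-)$ with $\ell_\pm\subset V_\pm$ and $J\ell_+=\ell_-$ (case (2)). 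After that, the curvature computation is a finite, if slightly tedious, substitution, and I would double-check the constants $-\tfrac43$ and $0$ against the Codazzi equation (which, for $h\equiv0$, forces $(\widetilde R(X,Y)Z)^\perp=0$, a useful consistency check that the tangent plane is curvature-invariant in the required sense).
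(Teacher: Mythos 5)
Your second half (Gauss equation with $h\equiv 0$ plus the explicit formula for $\widetilde R$, giving $K=-\tfrac43$ when the four $P$-terms vanish and $K=0$ when $P|_{T_pM}$ is an involution with eigenvalues $\pm1$) is sound and agrees with the paper. The genuine gap is in your first and crucial step: the claim that the dichotomy ``$P(T_pM)\subset N_pM$ or $P(T_pM)=T_pM$'' follows pointwise from the linear algebra of the pair $(J,P)$ (via $P^2=\Id$, $PJ=-JP$, symmetry, or a rank count on the eigenspaces $V_\pm$). This is false: a generic $J$-invariant $2$-plane sits in genuinely mixed position. Concretely, at any point take $u=E_1+F_2$ and $T=\mathrm{span}\{u,Ju\}$ (a nondegenerate, $J$-invariant, positive definite plane). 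Then $Pu=E_2+F_1$ satisfies $g(Pu,u)=g(E_2,F_2)+g(F_1,E_1)=-\tfrac23\neq 0$, so $PT\not\perp T$; and $Pu\notin T$, since writing $Pu=au+bJu$ and comparing the $F_1$- and $E_2$-components (which are $0$ for $u$, of opposite signs for $Ju$, and both equal to $1$ for $Pu$) gives contradictory values of $b$. So no purely algebraic argument about $(J,P)$ can exclude the mixed case; what excludes it is precisely the hypothesis that $M$ is totally geodesic.

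This is where the Codazzi equation, which you only invoke as a ``consistency check,'' is in fact the engine of the proof, and it is how the paper argues. Since $h=0$, Codazzi gives $(\widetilde R(v,Jv)v)^{\perp}=0$. Choosing $v$ as a critical point of $v\mapsto g(Pv,v)$ on the unit circle of $T_pM$ (possible because the induced metric is definite) yields $g(PJv,v)=g(Pv,Jv)=0$, and the curvature formula then reduces to $\widetilde R(v,Jv)v=\tfrac43\bigl(Jv+g(Pv,v)\,PJv\bigr)$. Tangency of this vector forces the dichotomy: either $g(Pv,v)=0$, in which case $g(Pv,v)=g(Pv,Jv)=g(PJv,v)=g(PJv,Jv)=0$, so $P$ maps $T_pM$ into the normal space and the Gauss equation gives $K=-\tfrac43$; or $g(Pv,v)\neq 0$, in which case $PJv$ must be tangent, hence $PJv=\pm Jv$ and $Pv=\mp v$ by $P^2=\Id$ and compatibility, so $P$ preserves $T_pM$ and $K=-\tfrac43\bigl(1-g(Pv,v)^2\bigr)=0$. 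Note also that the paper treats the negative definite induced metric separately (there the anti-invariant subcase is ruled out by a signature count on the normal space), a point your sketch leaves aside. To repair your proposal, replace the linear-algebra dichotomy by this Codazzi argument with the extremal choice of $v$; the rest of your outline then goes through.
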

 \begin{proof}
 Let $(A,B)\in M$ be a point of $M$ and $v$ a tangent vector to $M$ at $(A,B)$ such that $g(v,v)= \pm 1$.\\
\noindent {\bf Case 1:} If $g(v,v)=1$. Using Codazzi's equation, we have $(\widetilde{R}(v,Jv)v)^\perp=0$ then,  $\widetilde{R}(v,Jv)v$ is a multiple of $Jv$. By the Gauss equation, and using the fact that $h=0$, we have 
 \begin{equation*}
 R(v,Jv)v=-\tfrac{4}{3}\left(-Jv+g(PJv,v)Pv-g(Pv,v)PJv\right).
 \end{equation*}
 Since the metric $g$ is positive definite on $v$, we can choose $v$ such that $g(v,Pv)$ is maximal for all unit vectors in $(A,B)$, so that $g(Pv,Jv)=g(PJv,v)=0$ (since $P$ is symmetric). Then the Gauss equation becomes
 \begin{equation*}
 R(v,Jv)v=\tfrac{4}{3}\left(Jv+g(Pv,v)PJv\right).
\end{equation*}
Now, we will distinguish between two cases.\\
\noindent If $g(Pv,v)=0$, then $K=g(R(v,Jv)Jv,v)=-\frac{4}{3}.$ In this case $Pv$ and $PJv$ are normal vectors because $g(Pv,v)=g(Pv,Jv)=0$ and $g(PJv,v)=0$, $g(PJv,Jv)=-g(JPv,Jv)=-g(Pv,v)=0$. We can verify in this case that the vectors $v, Jv, Pv$ and $PJv$ are of length $1$ and $G(v,Pv)$, $JG(v,Pv)$ are of length $-2/3\,.$\\

\noindent If $g(Pv,v)\neq0$, we have $g(PJv,Jv)=-g(Pv,v)\neq 0$ which implies that $\displaystyle{PJv=-g(Pv,v)Jv}$, a non-zero multiple of $Jv$ then, $g(PJv,PJv)=-g(Pv,v)g(Jv,PJv).$ Therefore $\displaystyle{g(Pv,v)=\pm1}$ and $PJv=\pm Jv$. We assume that $PJv=-Jv$ then, $Pv=v$ and $K=-\tfrac{4}{3}g(Jv+g(Pv,v)PJv,Jv)=-\tfrac{4}{3}(1-g(Pv,v)^2)=0\,.$\\

\noindent {\bf Case 2:} If $g(v,v)=-1$. Using Codazzi's equation we have $\widetilde{R}(v,Jv)v$ is a multiple of $Jv$ and by Gauss equation, we deduce
$$
R(v,Jv)v=-\tfrac{4}{3}\left(Jv+g(PJv,v)Pv-g(Pv,v)PJv\right)\,.
$$
We can suppose $g(PJv,v)=0$ then, the Gauss equation simplifies to
$$
R(v,Jv)v=-\tfrac{4}{3}\left(Jv-g(Pv,v)PJv\right)\,.
$$
In this case, the assumption $g(Pv,v)=0$ is not possible since the vectors $Pv$ and $PJv$ are normal and so we have two tangent vectors ($v$ and $Jv$) and two normal vectors $Pv$ and $PJv$ of negative length. Hence $g(Pv,v)\neq0$, as before, $g(Pv,v)=\pm1$, and consequently, $PJv=\pm Jv$. We assume that $PJv=-Jv$ which implies $Pv=v$ and $K=0$.
 \end{proof}

Let us now investigate in more detail the two cases introduced in the previous proposition. We start with a flat surface for which $P$ preserves both the tangent and the normal space.  

\vspace{0.5cm}

\subsection*{Explicit examples}
In this section, we find explicitly the submanifolds $M$ such that $PTM\subset TM$ with Gauss curvature $K=0$: 

\vspace{0.3cm}

\noindent Let $(A,B)\in M$ be a point of $M$ and $v$ a tangent vector to $M$ such that $g(v,v)=1\,.$ The vectors $v$ and $Jv$ form an orthonormal basis of $T_{(A,B)}M\,.$ Also, we have
\begin{equation}\label{Pv=v}
Pv=v
\end{equation}
and
\begin{equation}\label{PJv=-Jv}
 PJv=-Jv.
\end{equation}
Similar to Lemma 3.1. in \cite{Bo-Di-Di-Vr}, we know in this case  that $(\nabla_XP)Y=0$ for any tangent vectors $X,Y$ which leads to $P(\nabla_Xv)=\nabla_Xv$. On the other hand, $\nabla_Xv$ is a multiple of $Jv$ which gives $P(\nabla_Xv)=-\nabla_Xv$. Then for any tangent vector $X\,,$
$$
\nabla_Xv=0\,,
$$
similarly for 
$$
\nabla_XJv=0\,.
$$

Consequently, $[v,Jv]=0$ and therefore we can find coordinates $s$ and $t$ on the surface such that $$v:=F_s\qquad {\rm and}\qquad Jv:=F_t,$$
where $F$ denotes the immersion, i.e.  $$F:M\to SL(2,\mathbb{R})\times SL(2,\mathbb{R}):(s,t)\mapsto (A(s,t),B(s,t)).$$  

\noindent Similar to \cite{Bo-Di-Di-Vr}, we may assume that $F_t=JF_s$ and therefore there are $2\times 2$-real matrices $\alpha, \beta, \gamma, \delta$ with vanishing trace such that
$$
A_s=A\alpha,\quad B_s=B\beta,\quad A_t=A\gamma,\quad B_t=B\delta.
$$
The matrices $\alpha, \beta, \gamma, \delta$ are such that
$$
\alpha=\beta\,,\quad \gamma=-\delta=-\frac{\alpha}{\sqrt{3}}\,. 
$$
This comes from $PF_s=Fs, JF_s=F_t$, 
which implies the following equations
\begin{align}\label{first-derivatives-case1}
\left\{\begin{aligned}
A_s&=A\alpha\,\
A_t&=-\frac{1}{\sqrt{3}}A\alpha,
\end{aligned}
\right.
\quad {\rm and}\quad
\left\{\begin{aligned}
B_s&=B\alpha,\\
B_t&=\frac{1}{\sqrt{3}}B\alpha.
\end{aligned}
\right.
\end{align} Now, we will distinguish between two cases:
\begin{itemize}
\item If $g$ is positive definite:
From \eqref{g in function of <,> explicitly}, we have 
$$
\langle \alpha,\alpha\rangle=\langle\beta,\beta\rangle=\frac{3}{2}\quad {\rm and}\quad \langle\gamma,\gamma\rangle=\langle\delta,\delta\rangle=\frac{1}{2}.
$$ 
Moreover, by \eqref{<,> in function of g}, we have
\begin{equation*}
\langle F_s, F_s\rangle=3\,,\quad \langle F_s,F_t\rangle=0\,,\quad\langle F_t,F_t\rangle=1
\end{equation*}
and using \eqref{Q}, we have in general $\langle X,QY\rangle=-\sqrt{3}g(X,PJY)$ so that
\begin{equation*}
\langle F_s,QF_s\rangle=\langle F_t, QF_t\rangle=0\,,\quad \langle F_s,QF_t\rangle=\langle F_t, Q F_t\rangle=\sqrt{3}\,.
\end{equation*}
Also, the usual connection $\nabla$ vanishes on the vectors $F_s$ and $F_t\,,$ then we deduce
\begin{equation*}
F_{ss}=\frac{3}{2} F\,,\quad F_{tt}=\frac{1}{2} F\quad {\rm and}\quad F_{st}=\frac{\sqrt{3}}{2}QF\,,
\end{equation*}
which is equivalent to say that the second derivatives of the components $A$ and $B$ satisfy the following equations
\begin{align}\label{second-derivatives-case1}
\left\{\begin{aligned}
A_{ss}&=\frac{3}{2}A\,,\\
A_{st}&=-\frac{\sqrt{3}}{2}A\,,\\
A_{tt}&=\frac{1}{2}A\,,
\end{aligned}
\right.
\quad {\rm and}\quad
\left\{\begin{aligned}
B_{ss}&=\frac{3}{2}B\,,\\
B_{st}&=\frac{\sqrt{3}}{2}B\,,\\
B_{tt}&=\frac{1}{2}B\,.\\
\end{aligned}
\right.
\end{align}
On the other hand, the integrablity condition $F_{st}=F_{ts}$ implies that $\alpha$ is a constant matrix. Hence, by deriving \eqref{first-derivatives-case1}, we get
\begin{align}\label{derivative of the 1st system}
\left\{\begin{aligned}
A_{ss}&=A\alpha\alpha,\\
A_{tt}&=\frac{1}{3}A\alpha\alpha,
\end{aligned}
\right.
\quad {\rm and}\quad
\left\{\begin{aligned}
B_{ss}&=B\alpha\alpha,\\
B_{tt}&=\frac{1}{3}B\alpha\alpha\,.
\end{aligned}
\right.
\end{align}
By identifying \eqref{second-derivatives-case1} and \eqref{derivative of the 1st system}, we obtain
$$
\alpha\alpha=\frac{3}{2}I\,,
$$
where $I$ is the identity matrix. The above equation is equivalent to
$$
\begin{pmatrix}
\alpha_1^2+\alpha_2^2-\alpha_3^2 & 0\\
0 & \alpha_1^2+\alpha_2^2-\alpha_3^2
\end{pmatrix}
=\begin{pmatrix}
\frac{3}{2} & 0\\
0 & \frac{3}{2}
\end{pmatrix}\,,
$$
where $\alpha_1, \alpha_2, \alpha_3$ are the coefficients of $\alpha$ in the basis
\small{$$\left\{\begin{pmatrix}
1 & 0\\
0 & -1
\end{pmatrix}\,,
\begin{pmatrix}
1 & 0\\
0 & -1
\end{pmatrix}\,,
\begin{pmatrix}
0 & 1\\
-1 & 0
\end{pmatrix}
\right\}.
$$}
\\
By using now that on the nearly K\"{a}hler $SL(2,\mathbb{R})\times SL(2,\mathbb{R})$, the map $$(p,q)\mapsto (ApC,BqC)$$ is an isometry. We can assume that $\alpha=\begin{pmatrix}
\sqrt{\frac{3}{2}} & 0\\
0 & -\sqrt{\frac{3}{2}}
\end{pmatrix}\,.
$

Now we are able to solve equations \eqref{first-derivatives-case1}.
\begin{proposition}
A flat totally geodesic  almost complex surface $M$  of $SL(2,\mathbb{R})\times SL(2,\mathbb{R})$ with positive definite induced metric is isometric to the  immersion $(s,t)\mapsto (A(s,t),B(s,t))$ where
\begin{equation*}
A(s,t)=\begin{pmatrix}
e^{\frac{1}{\sqrt{2}}\left(\sqrt{3}s-t\right)} & 0\\
0 & e^{-\frac{1}{\sqrt{2}}\left(\sqrt{3}s-t\right) }
\end{pmatrix}
\end{equation*}
and
\begin{equation*}
B(s,t)=\begin{pmatrix}
e^{\frac{1}{\sqrt{2}}\left(\sqrt{3}s+t\right)} & 0\\
0 & e^{-\frac{1}{\sqrt{2}}\left(\sqrt{3}s+t\right) }
\end{pmatrix}
\end{equation*}
\end{proposition}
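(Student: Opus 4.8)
The plan is to integrate the overdetermined first–order system \eqref{first-derivatives-case1} once the base point and the matrix $\alpha$ have been normalized. Since the metric $g$ and the almost complex structure $J$ are built from the left–invariant frame $E_i,F_i$, every left translation $L_{(A_0,B_0)}\colon (p,q)\mapsto(A_0p,B_0q)$ is an isometry preserving the nearly K\"ahler structure; composing the immersion $F$ with $L_{(A(0,0)^{-1},B(0,0)^{-1})}$ we may assume $F(0,0)=(A(0,0),B(0,0))=(I,I)$. Together with the reduction already carried out above (where the isometry $(p,q)\mapsto(ApC,BqC)$ was used to bring $\alpha$ to the diagonal form $\alpha=\begin{pmatrix}\sqrt{3/2}&0\\0&-\sqrt{3/2}\end{pmatrix}$, with $\alpha\alpha=\tfrac32 I$ and $\alpha$ constant), the system \eqref{first-derivatives-case1} becomes a constant–coefficient linear system.

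Next I would solve it directly. From $A_t=-\tfrac1{\sqrt3}A\alpha$ with $A(0,0)=I$ one gets $A(0,t)=\exp\bigl(-\tfrac{t}{\sqrt3}\alpha\bigr)$ by integrating along the $t$–axis, and then $A_s=A\alpha$ gives $A(s,t)=A(0,t)\exp(s\alpha)=\exp\bigl((s-\tfrac{t}{\sqrt3})\alpha\bigr)$, the two exponential factors commuting because they are polynomials in the single matrix $\alpha$; one then checks that this function satisfies both equations of the $A$–part of \eqref{first-derivatives-case1}, and the integrability $A_{st}=A_{ts}$ is automatic. The identical computation with $B_t=\tfrac1{\sqrt3}B\alpha$ yields $B(s,t)=\exp\bigl((s+\tfrac{t}{\sqrt3})\alpha\bigr)$. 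Since $\alpha$ is diagonal, $\exp(x\alpha)=\begin{pmatrix}e^{x\sqrt{3/2}}&0\\0&e^{-x\sqrt{3/2}}\end{pmatrix}$, and using $\sqrt{3/2}\,\bigl(s\mp\tfrac{t}{\sqrt3}\bigr)=\tfrac1{\sqrt2}(\sqrt3\,s\mp t)$ one recovers exactly the claimed formulas for $A(s,t)$ and $B(s,t)$; note $\det A=\det B=1$, so the image indeed lies in $SL(2,\mathbb R)\times SL(2,\mathbb R)$.

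There is essentially no analytic obstacle here: the real content of the proposition is the reduction to the normal form for $\alpha$, which is already in place, after which the argument is a routine integration of commuting constant–coefficient equations. The one point that deserves a line is the observation that the surface so obtained really is a flat, totally geodesic, almost complex surface — i.e. that the converse holds — which is checked by reversing the computation: for $F$ defined by the displayed formulas one verifies $PF_s=F_s$, $JF_s=F_t$ and $\nabla F_s=\nabla F_t=0$, whence $h\equiv0$ and $K=0$. Thus the only genuine difficulty is bookkeeping: keeping the two isometry normalizations straight (left translation to fix the base point, right translation by $C$ to diagonalize $\alpha$) and confirming that the resulting overdetermined system is consistent.
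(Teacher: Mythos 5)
Your proposal is correct and follows essentially the same route as the paper: after the normalization of $\alpha$ (already established in the preceding text), both arguments reduce to integrating the constant–coefficient linear system \eqref{first-derivatives-case1}, the paper via the characteristic coordinates $x=\tfrac{\sqrt{3}s+t}{2}$, $y=\tfrac{\sqrt{3}s-t}{2}$ and you via the matrix exponential $\exp\bigl((s\mp\tfrac{t}{\sqrt{3}})\alpha\bigr)$, with the remaining freedom absorbed by an isometry (your left translation fixing $F(0,0)=(I,I)$ versus the paper's choice $k=I$). Your added verification that the resulting immersion is indeed flat, totally geodesic and almost complex is a harmless (and welcome) extra check that the paper leaves implicit.
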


\begin{proof}
To determine the solution $A$ of the first system of equations in \eqref{first-derivatives-case1}, we may pose $$x=\frac{\sqrt{3}s+t}{2}\quad{\rm and}\quad y=\frac{\sqrt{3}s-t}{2}\,.$$
The system is now equivalent to 
\begin{align*}
\left\{\begin{aligned}
A_x&=0\\
A_y&=\frac{2}{\sqrt{3}}A\alpha
\end{aligned}
\right.
\end{align*}
which has the solution
$$
A(x,y)=k\begin{pmatrix}
e^{\frac{2}{\sqrt{2}}y} & 0\\
0 & e^{-\frac{2}{\sqrt{2}}y}
\end{pmatrix}
$$
where $k$ is a constant. By applying some isometries, we can suppose that $k=\begin{pmatrix}
1 & 0\\
0 & 1
\end{pmatrix}\,.
$
Then we have the solution given in the proposition. In a similar way we obtain the solution $B\,.$
\end{proof}

\item We now study the case where $g$ is negative definite

In this case, $$\langle\alpha,\alpha\rangle=\langle\beta,\beta\rangle=-\frac{3}{2}\quad{\rm and}\quad \langle\gamma,\gamma\rangle=\langle\delta,\delta\rangle=-\frac{1}{2}\,,$$
and from \eqref{<,> in function of g},
\begin{equation*}
\langle F_s, F_s\rangle=-3\,,\quad \langle F_s,F_t\rangle=0\,,\quad\langle F_t,F_t\rangle=-1\,,
\end{equation*}
also from \eqref{Q},
\begin{equation*}
\langle F_s,QF_s\rangle=\langle F_t, QF_t\rangle=0\,,\quad \langle F_s,QF_t\rangle=\langle F_t, Q F_t\rangle=-\sqrt{3}\,.
\end{equation*}
We proceed similarly as before to obtain that the second derivatives of the components $A$ and $B$ satisfy in this case
\begin{align}\label{second-derivatives-case2}
\left\{\begin{aligned}
A_{ss}&=-\frac{3}{2}A\,,\\
A_{st}&=\frac{\sqrt{3}}{2}A\,,\\
A_{tt}&=-\frac{1}{2}A\,,
\end{aligned}
\right.
\quad {\rm and}\quad
\left\{\begin{aligned}
B_{ss}&=-\frac{3}{2}B\,,\\
B_{st}&=-\frac{\sqrt{3}}{2}B\,,\\
B_{tt}&=-\frac{1}{2}B\,.\\
\end{aligned}
\right.
\end{align}
and $\alpha$ is a constant matrix which satisfies 
$$
\langle\alpha,\alpha\rangle=-\frac{3}{2}
$$
hence, $ \alpha=\begin{pmatrix}
0 & \sqrt{\frac{3}{2}}\\
-\sqrt{\frac{3}{2}} & 0
\end{pmatrix}
$

Now the equations \eqref{first-derivatives-case1} can be solved by posing, as before, $x=\frac{\sqrt{3}s+t}{2}$ and $y=\frac{\sqrt{3}s-t}{2}$, we will obtain the solution given by the following.
\begin{proposition}
A flat totally geodesic  almost complex surface $M$  of $SL(2,\mathbb{R})\times SL(2,\mathbb{R})$ with negative definite induced metric is isometric to the  immersion $(s,t)\mapsto (A(s,t),B(s,t))$ where
\begin{equation*}
A(s,t)=\begin{pmatrix}
\cos\left(\frac{1}{\sqrt{2}}\left(\sqrt{3}s-t\right)\right) & \sin\left(\frac{1}{\sqrt{2}}\left(\sqrt{3}s-t\right)\right)\\
-\sin\left(\frac{1}{\sqrt{2}}\left(\sqrt{3}s-t\right)\right) & \cos\left(\frac{1}{\sqrt{2}}\left(\sqrt{3}s-t\right)\right) 
\end{pmatrix}
\end{equation*}
and
\begin{equation*}
B(s,t)=\begin{pmatrix}
\cos\left(\frac{1}{\sqrt{2}}\left(\sqrt{3}s+t\right)\right) & \sin\left(\frac{1}{\sqrt{2}}\left(\sqrt{3}s+t\right)\right)\\
-\sin\left(\frac{1}{\sqrt{2}}\left(\sqrt{3}s+t\right)\right) & \cos\left(\frac{1}{\sqrt{2}}\left(\sqrt{3}s+t\right)\right)
\end{pmatrix}
\end{equation*}
\end{proposition}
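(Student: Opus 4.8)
The plan is to solve the system \eqref{first-derivatives-case1} directly, now that the constant matrix $\alpha$ has been normalized to $\alpha=\begin{pmatrix}0 & \sqrt{3/2}\\ -\sqrt{3/2} & 0\end{pmatrix}$. As in the positive-definite case, I would pass to the coordinates $x=\frac{\sqrt{3}s+t}{2}$ and $y=\frac{\sqrt{3}s-t}{2}$, in which the system for $A$ decouples: a short computation with the chain rule turns $A_s=A\alpha$, $A_t=-\frac{1}{\sqrt{3}}A\alpha$ into $A_x=0$ and $A_y=\frac{2}{\sqrt{3}}A\alpha$. Hence $A$ depends on $y$ alone and satisfies the linear matrix ODE $\frac{dA}{dy}=\frac{2}{\sqrt{3}}A\alpha$, whose solution is $A(y)=k\exp\!\left(\tfrac{2}{\sqrt{3}}\,y\,\alpha\right)$ for a constant matrix $k$.

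The next step is to evaluate the matrix exponential. Since $\alpha^2=-\tfrac{3}{2}I$, writing $\alpha=\sqrt{3/2}\,J_0$ with $J_0=\begin{pmatrix}0&1\\-1&0\end{pmatrix}$ and $J_0^2=-I$, we get $\exp\!\left(\tfrac{2}{\sqrt{3}}y\alpha\right)=\exp\!\left(\sqrt{2}\,y\,J_0\right)=\cos(\sqrt{2}\,y)\,I+\sin(\sqrt{2}\,y)\,J_0$, which is exactly the rotation matrix appearing in the statement once one substitutes $\sqrt{2}\,y=\tfrac{1}{\sqrt{2}}(\sqrt{3}s-t)$. Choosing the integration constant $k$ to be the identity (permissible because, as noted just before the positive-definite proposition, left multiplication $(p,q)\mapsto(Ap\cdot C,Bq\cdot C)$ and more generally the relevant left translations are isometries of the nearly K\"ahler structure, so we may place the base point at $(I,I)$) gives the claimed formula for $A(s,t)$. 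One should also verify that this $A$ indeed lands in $SL(2,\mathbb{R})$, which is immediate since a rotation matrix has determinant $1$; and that the second-derivative constraints \eqref{second-derivatives-case2} hold, which follows automatically because they were derived from the same first-order system.

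For $B$, the identical argument applies with the sign of $t$ reversed: from $B_s=B\alpha$, $B_t=\frac{1}{\sqrt{3}}B\alpha$ one finds in the $(x,y)$ coordinates that $B_y=0$ and $B_x=\frac{2}{\sqrt{3}}B\alpha$, so $B$ depends only on $x$, and exponentiating as above yields the rotation matrix with argument $\tfrac{1}{\sqrt{2}}(\sqrt{3}s+t)$. Again normalizing the constant to the identity via an isometry produces the stated $B(s,t)$.

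I do not expect a genuine obstacle here: the computation is entirely parallel to the positive-definite case already carried out, the only change being that $\alpha^2=-\tfrac{3}{2}I$ (rather than $+\tfrac{3}{2}I$) turns the hyperbolic exponential into a trigonometric one. The one point that needs a little care is the bookkeeping of the two integration constants for $A$ and $B$: a priori one gets $(A,B)=(k_1 R_1(y),k_2 R_2(x))$ with $k_1,k_2\in SL(2,\mathbb{R})$ independent, and one must check that a single isometry of the product nearly K\"ahler structure simultaneously normalizes $k_1=k_2=I$; this is exactly the left-translation isometry $(p,q)\mapsto(k_1 p,k_2 q)$ of $SL(2,\mathbb{R})\times SL(2,\mathbb{R})$, so it works. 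Everything else is routine differentiation and the standard solution of a constant-coefficient linear matrix ODE.
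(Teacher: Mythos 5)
Your proposal is correct and follows essentially the same route as the paper: the paper itself handles this case by the same change of variables $x=\tfrac{\sqrt{3}s+t}{2}$, $y=\tfrac{\sqrt{3}s-t}{2}$ used in the positive-definite case, merely leaving the (now trigonometric, since $\alpha^2=-\tfrac{3}{2}I$) integration and the normalization of the constants by the isometries $(p,q)\mapsto(ApC,BqC)$ implicit. Your explicit matrix-exponential computation and the remark on absorbing the two constants $k_1,k_2$ by left translations just spell out what the paper abbreviates.
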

\end{itemize}

Now we start the study of almost complex surfaces with parallel second fundamental form. As mentioned before the properties of Lemma 3.1 of \cite{Bo-Di-Di-Vr} remain valid for almost complex surfaces in SL(2,$\mathbb{R}$)$\times$SL(2,$\mathbb{R})$. 

\begin{proposition}\label{prop-parallel-second-fundamental-form}
If $M$ is an almost complex surface in SL(2,$\mathbb{R}$)$\times$SL(2,$\mathbb{R}$) with parallel second fundamental form, then either
\begin{itemize}
\item[(1)] P maps the tangent space into the normal space and in this case, either $M$ is totally geodesic with constant Gaussian curvature $-\tfrac{4}{3}$ or $M$ has constant Gaussian curvature $-\tfrac{5}{9}$
\item[(2)] P preserves the tangent space, in this case the Gaussian curvature is $0$ and $M$ is totally geodesic and therefore $M$ is congruent to one of the two previous mentioned examples. 
\end{itemize}
\end{proposition}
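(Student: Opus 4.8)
The plan is to follow the scheme of the proof of Proposition~\ref{Proposition totally geodesic}: first use the Codazzi equation to reduce to the algebraic behaviour of $P$ on $TM$, and then exploit the parallelism of $h$ through the Gauss and Ricci equations to pin down the curvature and, in two of the three cases, the vanishing of $h$. Throughout put $\xi:=h(v,v)$ for a unit tangent vector $v$; then $h(v,Jv)=J\xi$ and $h(Jv,Jv)=-\xi$ by $h(X,JY)=Jh(X,Y)$, so $h$ is completely determined by $\xi$.

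\textbf{Step 1 (the dichotomy).} Since $\nabla h=0$, the Codazzi equation forces $(\widetilde R(X,Y)Z)^\perp=0$ for all tangent $X,Y,Z$, in particular $(\widetilde R(v,Jv)v)^\perp=0$. Choosing $v$ so that $g(v,Pv)$ is extremal among unit tangent vectors — which by symmetry of $P$ forces $g(Pv,Jv)=g(PJv,v)=0$ — the explicit expression for $\widetilde R$ gives $\widetilde R(v,Jv)v=\tfrac43\big(\varepsilon Jv+g(Pv,v)PJv\big)$ with $\varepsilon=g(v,v)=\pm1$, whose normal part vanishes precisely when $g(Pv,v)=0$ or $PJv$ is tangent. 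This produces the same alternative as in Proposition~\ref{Proposition totally geodesic}: either $(1)$ $g(Pv,v)=0$, so that $Pv,PJv$ are normal and $v,Jv,Pv,PJv$ are mutually orthogonal with common norm $\varepsilon$ — whence the induced metric must be positive definite, since the ambient metric has signature $(4,2)$ and cannot contain a $4$-dimensional negative definite subspace — and $PTM\subseteq TM^\perp$; or $(2)$ $Pv=\pm v$, $PJv=\mp Jv$, so that $P$ preserves $TM$.

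\textbf{Step 2 (constancy of $K$).} Writing $\nabla_vv=aJv$ (legitimate since $g$ is definite on $M$), $\nabla h=0$ gives $\nabla^\perp_v\xi=2aJ\xi$ and the analogous relation along $Jv$; since $g(J\xi,\xi)=0$ this yields $v\big(g(\xi,\xi)\big)=Jv\big(g(\xi,\xi)\big)=0$, so $g(\xi,\xi)$ is constant, and the Gauss equation together with $\widetilde R(v,Jv)Jv=-\tfrac43v$ (valid for the chosen $v$ in Case~$(1)$) gives $K=-\tfrac43-2g(\xi,\xi)$; in particular $K$ is constant in all cases. In Case~$(2)$, as in the discussion preceding the explicit examples, the identities extending Lemma~3.1 of \cite{Bo-Di-Di-Vr} give $(\nabla_XP)Y=0$ on $M$; since $\nabla_Xv\in\mathbb{R}Jv$, this forces on the one hand $P(\nabla_Xv)=\nabla_Xv$ and on the other $P(\nabla_Xv)=-\nabla_Xv$, hence $\nabla v=\nabla Jv=0$, so $[v,Jv]=0$, $M$ is flat and $K=0$. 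Then $\nabla^\perp\xi=0$, and from $g(A_\xi v,v)=g(\xi,\xi)$, $g(A_\xi v,Jv)=g(J\xi,\xi)=0$ together with the Gauss identity $g(\xi,\xi)=0$ (using $K=0$ and $\widetilde R(v,Jv)v=0$), one gets $A_\xi=0$; the Ricci equation then reads $(\widetilde R(v,Jv)\xi)^\perp=R^\perp(v,Jv)\xi=0$, whereas the curvature formula gives $\widetilde R(v,Jv)\xi=\tfrac13\varepsilon J\xi$, a normal vector, so $J\xi=0$ and $\xi=0$. Thus in Case~$(2)$ $M$ is totally geodesic with $K=0$ and, by the earlier propositions, congruent to one of the two explicit flat examples.

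\textbf{Step 3 (Case~$(1)$) and the main obstacle.} If $\xi=0$, $M$ is totally geodesic with $K=-\tfrac43$. If $\xi\ne0$, I would push $\nabla h=0$ further: combining the Ricci identity with the Ricci equation relating $R^\perp$, $A_\xi$ and $(\widetilde R(v,Jv)\xi)^\perp$, and using \eqref{G(X,G(Z,W))} together with the explicit forms of $\widetilde R$ and $\widetilde\nabla G$, one obtains an algebraic system for the components of $\xi$ in the normal frame $\{Pv,\,PJv,\,G(v,Pv),\,JG(v,Pv)\}$ (recall $g(G(v,Pv),G(v,Pv))=-\tfrac23$ in Case~$(1)$, so this is indeed a frame of the normal bundle). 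Imposing that $g(\xi,\xi)$ be the constant forced by $K=-\tfrac43-2g(\xi,\xi)$, the only admissible non-zero $\xi$ should satisfy $g(\xi,\xi)=-\tfrac7{18}$, i.e. $K=-\tfrac59$. The hard part is precisely this computation: organising the consequences of $\nabla h=0$ in Case~$(1)$ so as to see that $\xi$ is either $0$ or forced onto the one orbit with $g(\xi,\xi)=-\tfrac7{18}$, and that no Gaussian curvature other than $-\tfrac43$ (totally geodesic) and $-\tfrac59$ can occur.
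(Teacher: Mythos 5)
Your Steps 1 and 2 follow the paper's line of argument and are sound: the Codazzi dichotomy is exactly the one used in Proposition \ref{Proposition totally geodesic}, and your treatment of the case where $P$ preserves the tangent space is in fact more complete than the paper's, which after $\nabla P=0$ simply asserts ``flat and totally geodesic''; your extra argument (from $K=0$ one only gets $g(\xi,\xi)=0$ in an indefinite normal bundle, but $A_\xi=0$, $\nabla^\perp\xi=0$ and the Ricci equation force $(\widetilde R(v,Jv)\xi)^\perp=\tfrac13 J\xi=0$, hence $\xi=0$) genuinely closes that detail in the positive definite subcase.

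The problem is Step 3: the restriction $K\in\{-\tfrac43,-\tfrac59\}$ when $PTM\subset T^\perp M$ is the entire content of item (1), and you leave it as an announced ``main obstacle'' rather than proving it, so the proposal does not establish the statement. The paper closes this by computing the single scalar $g\bigl(R^\perp(v,Jv)h(v,v),Jh(v,v)\bigr)$ in two independent ways. First, the Ricci identity with $\nabla h=0$ gives $R^\perp(v,Jv)h(v,v)=2h(R(v,Jv)v,v)$, whence the scalar equals $-2K\,g(\xi,\xi)=K^2+\tfrac43K$ after substituting $g(\xi,\xi)=-\tfrac12\bigl(K+\tfrac43\bigr)$. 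Second, the Ricci equation together with the explicit ambient curvature: since $g(\xi,Pv)=g(\xi,PJv)=0$ (the identities of Lemma 3.1 of \cite{Bo-Di-Di-Vr}, valid here), $\xi$ lies in the plane spanned by $G(v,Pv)$ and $JG(v,Pv)$, and one finds $g\bigl(R^\perp(v,Jv)\xi,J\xi\bigr)=\tfrac13 g(\xi,\xi)-2g(\xi,\xi)^2=-\tfrac{K^2}{2}-\tfrac32K-\tfrac{10}{9}$. Equating the two expressions gives $\tfrac32K^2+\tfrac{17}{6}K+\tfrac{10}{9}=0$, whose roots are exactly $-\tfrac43$ and $-\tfrac59$; this is the computation you gesture at but do not perform. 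Note also that the same observation that $\xi$ sits in the negative definite plane spanned by $G(v,Pv),JG(v,Pv)$ is what turns $K=-\tfrac43$ (i.e.\ $g(\xi,\xi)=0$) into $\xi=0$, hence ``totally geodesic'' in item (1) — a point your sketch would additionally need, since in this case the normal bundle has signature $(2,2)$ and a null $\xi$ is a priori possible.
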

\begin{proof}
Let $(A,B)\in M$ be a point of $M$ and $v$ a tangent vector to $M$ at $(A,B)$ such that $g(v,v)=\pm 1\,.$
\begin{itemize}
\item[-]{\bf Case 1} We suppose $g(v,v)=1\,.$\\
From Codazzi's equation, we can see that $\widetilde{R}(v,Jv)v$ is a multiple of $Jv\,.$ As we mentioned before, (in Proposition 4.1.) we can suppose $$g(Pv,Jv)=g(PJv,v)=0\,.$$ Then from Gauss equation, we have
$$
R(v,Jv)v=\widetilde{R}(v,Jv)v+2JA_{h(v,v)}v\,,
$$
with
$$
\widetilde{R}(v,Jv)v=\tfrac{4}{3}\left(Jv+g(Pv,v)PJv\right)\,.
$$
We study the two cases $g(Pv,v)=0$ and $g(Pv,v)\neq 0\,.$

\noindent If $g(Pv,v)\neq 0\,,$ then it follows from the Codazzi equation that $PJv$ is a tangent vector. Hence $P$ preserves the tangent space. Therefore we can take $v$ such that $g(Pv,v)=\pm 1\,,$ hence $PJV=\mp Jv\,.$ It follows then from the Gauss equation that $A_{h(v,v)} v=-\tfrac 12 K v$  and the Gauss curvature $K=-2||h(v,v)||^2\,.$ So, $||A_{h(v,v)}v||^2=||h(v,v)||^4=\frac{K^2}{4}\,.$ On the other hand, as $P$ preserves the tangent space we have that $\nabla P = 0$. Hence we deduce that the surface is flat and totally geodesic. 

\noindent If $g(Pv,v)=0\,,$ then $P$ maps tangent vectors into normal vectors and from the Gauss equation
$$
K=-\frac{4}{3}-2||h(v,v)||^2\,.
$$
Since $g(PJv,h(v,v))=g(Pv,h(v,v))=0\,,$ Ricci equation implies
\begin{eqnarray*}
g(R^{\perp}(v,Jv)h(v,v),Jh(v,v))&=&\frac{1}{3}||h(v,v)||^2-2||h(v,v)||^4\\
&=&-\frac{K^2}{2}-\frac{3}{2}K-\frac{10}{9}\,.
\end{eqnarray*}
On the other hand, Ricci identity gives
\begin{eqnarray*}
g(R^{\perp}(v,Jv)h(v,v),Jh(v,v))&=&2g(h(R(v,Jv)v,v),Jh(v,v))\\
&=&-2g(h(KJv,v),Jh(v,v))\\
&=& -2K||h(v,v)||^2\\
&=& K^2+\frac{4}{3}K\,.
\end{eqnarray*}
By identification, we have the equation
$$
\frac{3}{2}K^2+\frac{17}{6}K+\frac{10}{9}=0\,,
$$
which has the solutions $$K=-\frac{4}{3} \qquad K=-\frac{5}{9}\,.$$
\item[-]{\bf Case 2} We suppose $g(v,v)=-1\,.$
In a similar way as we did in case 1, we have
$$
R(v,Jv)v=-\frac{4}{3}\left(Jv-g(Pv,v)PJv\right)+2JA_{h(v,v)}v\,.
$$
If $g(Pv,v)\neq 0\,,$, similarly as before we get that $P$ preserves the tangent space.  Moreover, $g(Pv,v)PJv=-Jv$ and
$$
||h(v,v)||^2=-\frac{K}{2}\,.
$$
As $\nabla P=0$ and the eigenvalues of $P$ are constant, namely $\pm 1$, it again follows immediately that the surface is flat and therefore totally geodesic. 

\noindent If $g(Pv,v)=0\,,$  it follows that $G(v,Pv)$ is a normal with negative length. As the tangent space is negative definite in view of the index of the metric, we have that the normal space has to be positive definite. Hence we obtain a contradiction. 
\end{itemize}
\end{proof}

Now in order to be able to exclude the case with constant constant Gaussian  curvature $-\tfrac{5}{9}$ in the previous theorem and to get an explicit expression of the (totally geodesic) almost complex surface with constant Gaussian curvature $-\tfrac{4}{3}$ we will study more generally almost complex submanifolds with arbitrary Gauss curvature in more detail. We will in particular focus on those for which $P$ maps the tangent space into the normal space. 

 Let $M$ be an almost complex surface of ${\rm SL}(2,\mathbb{R})\times {\rm SL}(2,\mathbb{R})$ defined by the almost complex immersion $F:M\to {\rm SL}(2,\mathbb{R})\times {\rm SL}(2,\mathbb{R}): (s,t)\mapsto (A(s,t),B(s,t))$ where $(s,t)$ are the isothermal coordinates on $M\,.$ Similar as in  \cite{Bo-Di-Di-Vr},  we may assume $F_t=JF_s$ and there exist $2\times 2$-real matrices with vanishing trace $\widetilde{\alpha}, \widetilde{\beta}, \widetilde{\gamma}, \widetilde{\delta}$ such that 
\begin{equation}\label{first derivative}
A_s=A\widetilde{\alpha}, \quad A_t=A\widetilde{\beta},\quad  B_s=B\widetilde{\gamma},\quad B_t=B\widetilde{\delta}\,.
\end{equation}
Then the matrices $\widetilde{\alpha}, \widetilde{\beta}, \widetilde{\gamma}, \widetilde{\delta}$ are such that

\begin{eqnarray}
\widetilde{\gamma}&=&\frac{\widetilde{\alpha}}{2}-\frac{\sqrt{3}}{2}\widetilde{\beta}\label{gamma tilde and delta tilde in function of alpha tilde and beta tilde}\\
\widetilde{\delta}&=&\frac{\sqrt{3}}{2}\widetilde{\alpha}+\frac{\widetilde{\beta}}{2}
\end{eqnarray}
Furthermore, by using the integrability conditions  $A_{st}=A_{ts}$ and $B_{st}=B_{ts}$ we have the two equations
\begin{eqnarray}
{\widetilde{\alpha}}_t-{\widetilde{\beta}}_s&=&2\widetilde{\alpha}\times\widetilde{\beta}\label{integrability1}\\
{\widetilde{\alpha}}_s+{\widetilde{\beta}}_t&=&-\frac{2}{\sqrt{3}}\widetilde{\alpha}\times\widetilde{\beta}\label{integrability2}
\end{eqnarray}
where we define the product $\times$ by $\widetilde{\alpha}\times\widetilde{\beta}=\tfrac{1}{2}(\widetilde{\alpha}\widetilde{\beta}-\widetilde{\beta}\widetilde{\alpha})\,.$

Now by writing $\alpha=\cos\theta\widetilde{\alpha}+\sin\theta\widetilde{\beta}$ and $\beta=-\sin\theta\widetilde{\alpha}+\cos\theta\widetilde{\beta}$, where $\theta=\frac{\pi}{3}$, Equations \eqref{integrability1} and \eqref{integrability2} imply
\begin{eqnarray*}
\alpha_s+\beta_t&=&-\frac{4}{\sqrt{3}}\alpha\times\beta\\
\alpha_t-\beta_s&=&0
\end{eqnarray*}

Note that in the special case $PTM\subset T^{\perp}M\,,$
we have that $\widetilde{\alpha}$ and $\widetilde{\beta}$ are orthogonal with respect to the induced Euclidean product metric and of the same length (this is also true for $\alpha$ and $\beta$).

Hence, from the relation between the nearly K\"{a}hler metric and the usual Euclidean product metric (see \eqref{g in function of <,> explicitly}), we have 
$$
g(F_s,F_s)=\langle\alpha,\alpha\rangle=g(F_t,F_t) \quad {\rm and}\quad g(F_s,F_t)=0\,.
$$
Since $s,t$ are the isothermal coordinates, we may assume $g(F_s,F_s)=g(F_t,F_t)=e^{2\omega}$ for a smooth positive function $\omega$ on $M$. Then, the induced metric on the surface $M$ is given by
$$
g=e^{2\omega}ds^2+e^{2\omega}dt^2\,.
$$
It follows that the Levi Civita connection on the surface $M$ is given by
\begin{eqnarray*}
\nabla_{F_s}F_s&=&\omega_s F_s-\omega_t F_t\\\notag
\nabla_{F_t}F_t&=&\omega_tF_t-\omega_sF_s\\\notag
\nabla_{F_s}F_t=\nabla_{F_t}F_s&=&\omega_tF_s+\omega_sF_t\,.
\end{eqnarray*}

From \eqref{first derivative} and \eqref{gamma tilde and delta tilde in function of alpha tilde and beta tilde}, we have that the components $A$ and $B$ of the immersion $F$ verify the equations
\begin{align}
\left\{\begin{aligned}
A_{ss}&=A(e^{2\omega}I+{\widetilde{\alpha}}_s)\,,\\
A_{st}&=A(\widetilde{\alpha}\widetilde{\beta}+{\widetilde{\beta}}_s)\,,\\
A_{tt}&=A(e^{2\omega}I+{\widetilde{\beta}}_t)\,,
\end{aligned}
\right.
\quad {\rm and}\quad
\left\{\begin{aligned}
B_{ss}&=B(e^{2\omega}I+\tfrac{1}{2}{\widetilde{\alpha}}_s-\tfrac{\sqrt{3}}{2}{\widetilde{\beta}}_s)\,,\\
B_{st}&=B(\tfrac{1}{4}(\widetilde{\alpha}\widetilde{\beta}-3\widetilde{\beta}\widetilde{\alpha})+\tfrac{\sqrt{3}}{2}{\widetilde{\alpha}}_s+\tfrac{1}{2}{\widetilde{\beta}}_s)\,,\\
B_{tt}&=B(e^{2\omega}I+\tfrac{\sqrt{3}}{2}{\widetilde{\alpha}}_t+\tfrac{1}{2}{\widetilde{\beta}}_t)\,.\\
\end{aligned}
\right.
\end{align}

So far the above equations remained valid for any almost complex surface for which $P$ maps to tangent space to the normal space. From now on we will assume that the surface is moreover totally geodesic. Then we have the following.
\begin{proposition}
A totally geodesic  almost complex surface $M$  of $SL(2,\mathbb{R})\times SL(2,\mathbb{R})$ for which $P$ maps tangent vectors into normal vectors   is isometric to the  immersion $(s,t)\mapsto (A(s,t),B(s,t))$ where
\begin{equation*}
A(s,t)=\left(
\begin{array}{cc}
 \frac{\sqrt{3}y_1}{2}+\frac{1}{2} & \frac{1}{2} \sqrt{3} (y_2+y_3)
   \\
 \frac{1}{2} \sqrt{3} (y_2-y_3) & \frac{1}{2}-\frac{\sqrt{3}y_1}{2}
   \\
\end{array}
\right)
\end{equation*}
and
\begin{equation*}
B(s,t)=\left(
\begin{array}{cc}
 \frac{1}{2}-\frac{\sqrt{3} y_1}{2} & -\frac{1}{2} \sqrt{3} (y_2+y_3)
   \\
 -\frac{1}{2} \sqrt{3} (y_2-y_3) & \frac{\sqrt{3} y_1}{2}+\frac{1}{2}
   \\
\end{array}
\right)\,,
\end{equation*}
where $(y_1,y_2,y_3)$ is a point of the hyperbolic quadric $$y_1^2+y_2^2-y_3^2=-1$$
\end{proposition}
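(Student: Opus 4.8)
The plan is to use the hypothesis $h=0$ to turn the frame system set up above into explicit evolution equations for $\widetilde{\alpha},\widetilde{\beta}$, and then to exhibit a matrix‑valued first integral. I work throughout with the standard algebra of trace‑zero $2\times2$ matrices; since here $\langle\widetilde{\alpha},\widetilde{\beta}\rangle=0$ and $\langle\widetilde{\alpha},\widetilde{\alpha}\rangle=\langle\widetilde{\beta},\widetilde{\beta}\rangle=e^{2\omega}$ this gives $\widetilde{\alpha}\widetilde{\beta}=\widetilde{\alpha}\times\widetilde{\beta}=:\mu=\widetilde{\gamma}\widetilde{\delta}$, $\widetilde{\alpha}^2=\widetilde{\gamma}^2=e^{2\omega}I$, $\mu^2=-e^{4\omega}I$, $\widetilde{\alpha}\mu=e^{2\omega}\widetilde{\beta}$, $\widetilde{\gamma}\mu=e^{2\omega}\widetilde{\delta}$, together with $\langle\mu,\mu\rangle=-e^{4\omega}$. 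First I would compute $\widetilde{\nabla}_{F_s}F_s$, $\widetilde{\nabla}_{F_s}F_t$, $\widetilde{\nabla}_{F_t}F_t$ from Lemma~\ref{lemma relation between connections} in the form $\widetilde{\nabla}_XY=\nabla^{E}_XY-\tfrac12\bigl(JG(X,PY)+JG(Y,PX)\bigr)$, using Proposition~\ref{proposition G explicity} for $G$ and the fact that $SL(2,\mathbb{R})\times SL(2,\mathbb{R})\subset\mathbb{R}^{4}\times\mathbb{R}^{4}$ has $\langle\cdot,\cdot\rangle$‑second fundamental form along the position vector, so that $\nabla^{E}_{F_s}F_s=(A\widetilde{\alpha}_s,B\widetilde{\gamma}_s)$ (and $\nabla^{E}_{F_s}F_t$, $\nabla^{E}_{F_t}F_t$ the same way). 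One finds $G(F_s,PF_s)=G(F_t,PF_t)=(A\mu,B\mu)$ and the cancellation $G(F_s,PF_t)+G(F_t,PF_s)=0$; equating $\widetilde{\nabla}$ with the already‑computed induced connection $\nabla^{M}$ and using \eqref{integrability1}--\eqref{integrability2} yields
\begin{align*}
\widetilde{\alpha}_s&=\omega_s\widetilde{\alpha}-\omega_t\widetilde{\beta}-\tfrac1{\sqrt3}\mu, & \widetilde{\alpha}_t&=\omega_t\widetilde{\alpha}+\omega_s\widetilde{\beta}+\mu,\\
\widetilde{\beta}_s&=\omega_t\widetilde{\alpha}+\omega_s\widetilde{\beta}-\mu, & \widetilde{\beta}_t&=-\omega_s\widetilde{\alpha}+\omega_t\widetilde{\beta}-\tfrac1{\sqrt3}\mu,
\end{align*}
and the compatibility $\widetilde{\alpha}_{st}=\widetilde{\alpha}_{ts}$ reproduces $\omega_{ss}+\omega_{tt}=\tfrac43e^{2\omega}$, i.e. $K=-\tfrac43$, consistently with Proposition~\ref{Proposition totally geodesic}.

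From these equations a direct computation (using $\partial_s(A\mu)=Ae^{2\omega}\widetilde{\beta}+A\mu_s$ and the frame equations for $\mu_s$) gives $\partial_s(e^{-2\omega}A\mu)=-\tfrac1{\sqrt3}A_s$, $\partial_t(e^{-2\omega}A\mu)=-\tfrac1{\sqrt3}A_t$ and, symmetrically, $\partial_s(e^{-2\omega}B\mu)=\tfrac1{\sqrt3}B_s$, $\partial_t(e^{-2\omega}B\mu)=\tfrac1{\sqrt3}B_t$. Hence there are constant matrices $N,M$ with $e^{-2\omega}A\mu=-\tfrac1{\sqrt3}(A+N)$ and $e^{-2\omega}B\mu=\tfrac1{\sqrt3}(B+M)$. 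Left‑multiplying by $A^{-1}$ resp. $B^{-1}$ gives $e^{-2\omega}\mu=-\tfrac1{\sqrt3}(I+A^{-1}N)=\tfrac1{\sqrt3}(I+B^{-1}M)$, so $A^{-1}N+B^{-1}M=-2I$; taking traces (with $\operatorname{tr}\mu=0$) and pairing with $A$, using $\langle\mu,\mu\rangle=-e^{4\omega}$ and $\langle A,A\rangle=\langle B,B\rangle=-1$, then gives $\det N=\det M=4$ and $\langle A,N\rangle=\langle B,M\rangle=1$. Since $N/2,M/2\in SL(2,\mathbb{R})$, the isometry $(p,q)\mapsto(-2N^{-1}p,\,-2M^{-1}q)$ of $(SL(2,\mathbb{R})\times SL(2,\mathbb{R}),g)$ (independent left translations of the two factors, so $N$ and $M$ may be normalised simultaneously) lets us assume $N=M=-2I$. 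Then $\langle A,N\rangle=1$ forces $\operatorname{tr}A=1$, hence $A^{-1}=I-A$; likewise $\operatorname{tr}B=1$, $B^{-1}=I-B$; and $A^{-1}N+B^{-1}M=-2I$ becomes $A^{-1}+B^{-1}=I$, i.e. $A+B=I$ and $B=A^{-1}$. Putting $Y:=\tfrac1{\sqrt3}(2A-I)$ one gets $\operatorname{tr}Y=0$, $\det Y=1$, $A=\tfrac12(I+\sqrt3\,Y)$, $B=A^{-1}=\tfrac12(I-\sqrt3\,Y)$; writing a general trace‑zero matrix as $Y=\bigl(\begin{smallmatrix}y_1&y_2+y_3\\ y_2-y_3&-y_1\end{smallmatrix}\bigr)$, the condition $\det Y=1$ is exactly $y_1^2+y_2^2-y_3^2=-1$, and substituting recovers the $A(s,t)$, $B(s,t)$ of the statement, with $(y_1,y_2,y_3)$ the composition of the immersion with this parametrisation of $\{Y\in SL(2,\mathbb{R}):\operatorname{tr}Y=0\}$. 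Conversely, one checks that this surface really is totally geodesic, almost complex and satisfies $PTM\subset T^{\perp}M$ — e.g. by verifying $\widetilde{\nabla}_XX=0$ along the curves $(s,t)\mapsto(\tfrac12(I+\sqrt3\,Y(s)),\tfrac12(I-\sqrt3\,Y(s)))$ induced by the geodesics of $\{Y\in SL(2,\mathbb{R}):\operatorname{tr}Y=0\}$.

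The main work, and the step most prone to sign errors, is the computation of $\widetilde{\nabla}$ of the adapted frame in the first paragraph: extracting $G(F_s,PF_s)=(A\mu,B\mu)$ and the cancellation $G(F_s,PF_t)+G(F_t,PF_s)=0$ from Proposition~\ref{proposition G explicity}, together with the bookkeeping in $\nabla^{E}$. The only other delicate point is the simultaneous normalisation $N=M=-2I$, which relies on the isometry group of $(SL(2,\mathbb{R})\times SL(2,\mathbb{R}),g)$ containing independent left translations of the two factors; everything else — the first‑integral computation, the $2\times2$ matrix algebra at the end, and the converse verification — is routine.
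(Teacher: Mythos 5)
Your proposal is correct (I checked the key identities: with $\mu=\widetilde{\alpha}\times\widetilde{\beta}$, $\widetilde{\alpha}^2=\widetilde{\beta}^2=e^{2\omega}I$ and your frame equations — which coincide with the system the paper derives from Lemma \ref{lemma relation between connections} and Proposition \ref{proposition G explicity} — one indeed gets $\partial_s(e^{-2\omega}A\mu)=-\tfrac{1}{\sqrt3}A_s$, $\partial_t(e^{-2\omega}A\mu)=-\tfrac{1}{\sqrt3}A_t$ and the analogous relations for $B$ with opposite sign, as well as $\langle N,N\rangle=-4$, $\langle A,N\rangle=1$), but from the frame equations onward it follows a genuinely different route than the paper. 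The paper rotates to $\alpha,\beta$, introduces a potential $\varepsilon$ with $\varepsilon_s=\alpha$, $\varepsilon_t=\beta$, shows that $\varepsilon$ is a totally umbilical surface in the $3$-dimensional space of trace-zero matrices lying on the hyperboloid $\varepsilon_1^2+\varepsilon_2^2-\varepsilon_3^2=-\tfrac34$, chooses an explicit isothermal parametrisation of that hyperboloid, and then integrates the linear systems $A_s=A\widetilde{\alpha}$, etc., explicitly. You instead produce the matrix-valued first integrals $e^{-2\omega}A\mu+\tfrac1{\sqrt3}A$ and $e^{-2\omega}B\mu-\tfrac1{\sqrt3}B$, normalise the constants $N,M$ by independent left translations (legitimate, since $(p,q)\mapsto(Cp,Dq)$ with $C,D\in SL(2,\mathbb{R})$ preserves $\langle\cdot,\cdot\rangle$, hence $g$, $J$ and $P$), and read off the intrinsic characterisation $\operatorname{tr}A=1$, $B=A^{-1}$, which via $Y=\tfrac1{\sqrt3}(2A-I)$ is exactly the parametrisation by the quadric $y_1^2+y_2^2-y_3^2=-1$ in the statement (and is consistent with the paper's explicit $A,B$, which satisfy $AB=I$, $\operatorname{tr}A=\operatorname{tr}B=1$). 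Your approach avoids the explicit hyperboloid parametrisation and the integration of the PDE systems, replacing them with algebra of $2\times2$ trace-zero matrices; the paper's approach, in exchange, gives the geometric interpretation of the surface through the umbilical potential $\varepsilon$. The only places where you should add a line of detail in a write-up are the computation $\langle N,N\rangle=3e^{-4\omega}\langle\mu,\mu\rangle+\langle A,A\rangle=-4$ (your phrase ``taking traces and pairing with $A$'' is slightly loose) and the converse verification you already flag.
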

\begin{proof}
Using $$D_X Y=\nabla^E_X Y +\tfrac{1}{2}\langle X,Y\rangle F+\tfrac{1}{2}\langle X,QY\rangle QF$$
which relates the pseudo Euclidean connection with the Levi Civita connection of the nearly kaehler metric and by supposing that $M$ is totally geodesic and then by using Proposition \ref{proposition G explicity} we have
\begin{align*}
\left\{\begin{aligned}
A_{ss}&=A(\omega_s\widetilde{\alpha}-\omega_t\widetilde{\beta}-\tfrac{1}{\sqrt{3}}\widetilde{\alpha}\times\widetilde{\beta}+ e^{2\omega}I)\,,\\
A_{st}&=A(\omega_t\widetilde{\alpha}+\omega_s\widetilde{\beta})\,,\\
A_{tt}&=A(-\omega_s\widetilde{\alpha}+\omega_t\widetilde{\beta}-\tfrac{1}{\sqrt{3}}\widetilde{\alpha}\times\widetilde{\beta}+e^{2\omega}I)\,,
\end{aligned}
\right.
\end{align*}
and
\begin{align*}
\left\{\begin{aligned}
B_{ss}&=B(\omega_s\widetilde{\gamma}-\omega_t\widetilde{\delta}-\tfrac{1}{\sqrt{3}}\widetilde{\alpha}\times\widetilde{\beta}+ e^{2\omega}I)\\
B_{st}&=A(\omega_t\widetilde{\gamma}+\omega_s\widetilde{\delta})\,,\\
B_{tt}&=B(-\omega_s\widetilde{\gamma}+\omega_t\widetilde{\delta}+\tfrac{1}{\sqrt{3}}\widetilde{\alpha}\times\widetilde{\beta}+e^{2\omega}I)\,.
\end{aligned}
\right.
\end{align*}

Hence, by identification we deduce that $\widetilde{\alpha}$ and $\widetilde{\beta}$ are determined by the following system of partial differential equations
\begin{eqnarray*}
\widetilde{\alpha}_s&=&-\omega_t\widetilde{\beta}+\omega_s\widetilde{\alpha}-\tfrac{1}{\sqrt{3}}\widetilde{\alpha}\times\widetilde{\beta}\,,\\
\widetilde{\alpha}_t&=&\omega_t\widetilde{\alpha}+\omega_s\widetilde{\beta}+\widetilde{\alpha}\times\widetilde{\beta}\,,\\
\widetilde{\beta}_s&=&\omega_t\widetilde{\alpha}+\omega_s\widetilde{\beta}-\widetilde{\alpha}\times\widetilde{\beta}\,,\\
\widetilde{\beta}_t&=&\omega_t\widetilde{\beta}-\omega_s\widetilde{\alpha}-\tfrac{1}{\sqrt{3}}\widetilde{\alpha}\times\widetilde{\beta}\,,
\end{eqnarray*}
which in terms of $\alpha$ and $\beta$ becomes
\begin{eqnarray*}
\alpha_s&=&-\omega_t\beta+\omega_s\alpha-\tfrac{2}{\sqrt{3}}\alpha\times\beta\,,\\
\alpha_t&=&\omega_t\alpha+\omega_s\beta\,,\\
\beta_s&=&\omega_t\alpha+\omega_s\beta\,,\\
\beta_t&=&\omega_t\beta-\omega_s\alpha-\tfrac{2}{\sqrt{3}}\alpha\times\beta\,.
\end{eqnarray*}

 Then locally there exists a matrix $\varepsilon$ such that $\varepsilon_s=\alpha$, $\varepsilon_t=\beta$. The surface $\varepsilon$ is determined by
\begin{eqnarray}\label{system of equations epsilon}
\varepsilon_{ss}&=&-\omega_t\varepsilon_t+\omega_s\varepsilon_s-\tfrac{2}{\sqrt{3}}\varepsilon_s\times\varepsilon_t\,,\notag\\
\varepsilon_{st}&=&\omega_t\varepsilon_s+\omega_s\varepsilon_t\,,\notag\\
\varepsilon_{tt}&=&\omega_t\varepsilon_t-\omega_s\varepsilon_s-\tfrac{2}{\sqrt{3}}\varepsilon_s\times\varepsilon_t\,.
\end{eqnarray}
A unit normal vector field on the surface $M$ determined by the matrix $\varepsilon$ is given by $\xi=-\frac{\varepsilon_s\times\varepsilon_t}{e^{2\omega}}$

On the other hand, $\varepsilon_{st}=D_{\varepsilon_s}\varepsilon_t=\nabla_{\varepsilon_s}\varepsilon_t+\widetilde{h}(\varepsilon_s,\varepsilon_t)\xi$, by identification with the second equation of \eqref{system of equations epsilon}, we deduce that $h(\varepsilon_s,\varepsilon_t)=0$, in a similar way $h(\varepsilon_s,\varepsilon_s)=h(\varepsilon_t,\varepsilon_t)=\frac{2}{\sqrt{3}}e^{2\omega}\xi\,.$
Hence the surface determined by $\varepsilon$ is totally umbilical with shape operator $S=\frac{2}{\sqrt{3}}I\,.$
Moreover, as $D_{\frac{\partial}{\partial_u}}(\varepsilon+\frac{\sqrt{3}}{2}\xi)=D_{\frac{\partial}{\partial_v}}(\varepsilon+\frac{\sqrt{3}}{2}\xi)=0$, we deduce that $\varepsilon$ is a hyperbolic space of cartesian equation 
$$\varepsilon_1^2+\varepsilon_2^2-\varepsilon_3^2=-\frac{3}{4}\,.$$ 
From this we can now reverse engineer the original immersion. We can take a local isothermal parametrisation of this hypersurface by
\begin{align*}
&\varepsilon_1=-\tfrac{\sqrt{3} s}{(s^2 + t^2 - 1)}\\
&\varepsilon_2=-\tfrac{\sqrt{3} t}{(s^2 + t^2 - 1)}\\
&\varepsilon_3=\tfrac{\sqrt{3}}{2}\tfrac{s^2+t^2+1}{s^2+t^2-1}
\end{align*}
where we look at $\varepsilon$ as the matrix 
$$\varepsilon=\begin{pmatrix} \varepsilon_1 & \varepsilon_2+\varepsilon_3\\
\varepsilon_2-\varepsilon_3 &\varepsilon_1
\end{pmatrix}$$
It then follows immediately that 
\begin{align*}
\alpha=\left(
\begin{array}{cc}
 \frac{\sqrt{3} \left(s^2-t^2+1\right)}{\left(s^2+t^2-1\right)^2} & \frac{2 \sqrt{3} s
   (t-1)}{\left(s^2+t^2-1\right)^2} \\
 \frac{2 \sqrt{3} s (t+1)}{\left(s^2+t^2-1\right)^2} & \frac{\sqrt{3}
   \left(-s^2+t^2-1\right)}{\left(s^2+t^2-1\right)^2} \\
\end{array}
\right)\\
\beta=\left(
\begin{array}{cc}
 \frac{2 \sqrt{3} s t}{\left(s^2+t^2-1\right)^2} & \frac{\sqrt{3}
   \left((t-1)^2-s^2\right)}{\left(s^2+t^2-1\right)^2} \\
 \frac{\sqrt{3} \left((t+1)^2-s^2\right)}{\left(s^2+t^2-1\right)^2} & -\frac{2 \sqrt{3} s
   t}{\left(s^2+t^2-1\right)^2} \\
\end{array}
\right)\,.
\end{align*}
From this we now deduce that
\begin{align*}
&\widetilde \alpha=\left(
\begin{array}{cc}
 \frac{\sqrt{3} s^2-6 s t-\sqrt{3} \left(t^2-1\right)}{2 \left(s^2+t^2-1\right)^2} &
   \frac{3 s^2+2 \sqrt{3} s (t-1)-3 (t-1)^2}{2 \left(s^2+t^2-1\right)^2} \\
 \frac{3 s^2+2 \sqrt{3} s (t+1)-3 (t+1)^2}{2 \left(s^2+t^2-1\right)^2} & \frac{-\sqrt{3}
   s^2+6 s t+\sqrt{3} \left(t^2-1\right)}{2 \left(s^2+t^2-1\right)^2} \\
\end{array}
\right)\\
&\widetilde \beta=\left(
\begin{array}{cc}
 \frac{3 s^2+2 \sqrt{3} s t-3 t^2+3}{2 \left(s^2+t^2-1\right)^2} & \frac{-\sqrt{3} s^2+6 s
   (t-1)+\sqrt{3} (t-1)^2}{2 \left(s^2+t^2-1\right)^2} \\
 \frac{-\sqrt{3} s^2+6 s (t+1)+\sqrt{3} (t+1)^2}{2 \left(s^2+t^2-1\right)^2} & -\frac{3
   s^2+2 \sqrt{3} s t-3 t^2+3}{2 \left(s^2+t^2-1\right)^2} \\
\end{array}
\right)\\
&\widetilde \gamma=\left(
\begin{array}{cc}
 \frac{-\sqrt{3} s^2-6 s t+\sqrt{3} \left(t^2-1\right)}{2 \left(s^2+t^2-1\right)^2} &
   \frac{3 s^2-2 \sqrt{3} s (t-1)-3 (t-1)^2}{2 \left(s^2+t^2-1\right)^2} \\
 \frac{3 s^2-2 \sqrt{3} s (t+1)-3 (t+1)^2}{2 \left(s^2+t^2-1\right)^2} & \frac{\sqrt{3}
   s^2+6 s t-\sqrt{3} \left(t^2-1\right)}{2 \left(s^2+t^2-1\right)^2} \\
\end{array}
\right)\\
&\widetilde \delta=\left(
\begin{array}{cc}
 \frac{3 s^2-2 \sqrt{3} s t-3 t^2+3}{2 \left(s^2+t^2-1\right)^2} & \frac{\sqrt{3} s^2+6 s
   (t-1)-\sqrt{3} (t-1)^2}{2 \left(s^2+t^2-1\right)^2} \\
 \frac{\sqrt{3} s^2+6 s (t+1)-\sqrt{3} (t+1)^2}{2 \left(s^2+t^2-1\right)^2} & \frac{-3
   s^2+2 \sqrt{3} s t+3 t^2-3}{2 \left(s^2+t^2-1\right)^2} \\
\end{array}
\right)\,.
\end{align*}
Solving now the differential equations for $A$ and $B$ we find that
\begin{align*}
A=\left(
\begin{array}{cc}
 \frac{1}{2}-\frac{\sqrt{3} s}{s^2+t^2-1} & \frac{\sqrt{3} \left(s^2+(t-1)^2\right)}{2
   \left(s^2+t^2-1\right)} \\
 -\frac{\sqrt{3} \left(s^2+(t+1)^2\right)}{2 \left(s^2+t^2-1\right)} & \frac{\sqrt{3}
   s}{s^2+t^2-1}+\frac{1}{2} \\
\end{array}
\right)\\
B=\left(
\begin{array}{cc}
 \frac{\sqrt{3} s}{s^2+t^2-1}+\frac{1}{2} & -\frac{\sqrt{3} \left(s^2+(t-1)^2\right)}{2
   \left(s^2+t^2-1\right)} \\
 \frac{\sqrt{3} \left(s^2+(t+1)^2\right)}{2 \left(s^2+t^2-1\right)} &
   \frac{1}{2}-\frac{\sqrt{3} s}{s^2+t^2-1} \\
\end{array}
\right)\,.
\end{align*}
Replacing now the coordinates by $\varepsilon_i$ and rescaling completes the proof of the proposition. 
\end{proof}

Finally we will study the case when $K=-5/9$ and $PTM\subset T^\perp M\,.$ We consider the basis
$$
e_1=V\quad e_2=JV \quad e_3=PV \quad e_4=JPV \quad e_5=G(V,PV) \quad e_6=-G(JV,PV)\,,
$$
where $V$ is a unit tangent vector. All these vectors are mutually orthogonal, $e_1,e_2,e_3,e_4$ are of unit length and $e_5,e_6$ are of length $-2/3\,.$

By using the relation \eqref{G(X,G(Z,W))} we have
\small{
\begin{align*}
G(e_1,e_2)&=0& G(e_1,e_3)&=e_5& G(e_1,e_4)&=-e_6 & G(e_1,e_5)&=\tfrac{2}{3}e_3 & G(e_1,e_6)&=-\tfrac{2}{3}e_4\\
G(e_2,e_3)&=-e_6& G(e_2,e_4)&=-e_5& G(e_2,e_5)&=-\tfrac{2}{3}e_4& G(e_2,e_6)&=-\tfrac{2}{3}e_3& G(e_3,e_4)&=0\\
G(e_3,e_5)&=\tfrac{2}{3}e_1& G(e_3,e_6)&=\tfrac{2}{3}e_2& G(e_4,e_5)&=\tfrac{2}{3}e_2& G(e_4,e_6)&=\tfrac{2}{3}e_1& G(e_5,e_6)&=0\,.
\end{align*}
}

Let $\widetilde{\nabla}$ denote the Levi-Civita connection on ${\rm SL}(2,\mathbb{R})\times {\rm SL}(2,\mathbb{R})\,.$
We will write $\widetilde{\nabla}_{e_1}e_1=\nabla_{e_1}e_1+h(e_1,e_1)=a_1e_2+a_2e_5+a_3e_6$ (the coefficients of $e_3$ and $e_4$ are $0$ from page 6 in \cite{Bo-Di-Di-Vr}), where $a_1,a_2,a_3$ are functions to determine and let $b_1=g(\widetilde{\nabla}_{e_2}e_1,e_2)$.

The Levi-Civita connection $\widetilde{\nabla}$ is computed in term of $a_1,a_2,a_3$ and $b_1$, more precisely

\begin{eqnarray*}
\widetilde{\nabla}_{e_1}e_2&=&-a_1e_1-a_3e_5+a_2e_6\\
\widetilde{\nabla}_{e_1}e_3&=&-a_1e_4+a_2e_5+(\tfrac{1}{2}-a_3)e_6\\
\widetilde{\nabla}_{e_1}e_4&=&a_1e_3+(\tfrac{1}{2}+a_3)e_5+a_2e_6\\
\widetilde{\nabla}_{e_1}e_5&=&\tfrac{2}{3}a_2e_1-\tfrac{2}{3}a_3e_2+\tfrac{2}{3}a_2e_3+\tfrac{2}{3}(\tfrac{1}{2}+a_3)e_4\\
\widetilde{\nabla}_{e_1}e_6&=&\tfrac{2}{3}a_3e_1+\tfrac{2}{3}a_2e_2+\tfrac{2}{3}(\tfrac{1}{2}-a_3)e_3+\tfrac{2}{3}a_2e_4\\
\widetilde{\nabla}_{e_2}e_1&=&b_1e_2-a_3e_5+a_2e_6\\
\widetilde{\nabla}_{e_2}e_2&=&-b_1e_1-a_2e_5-a_3e_6\\
\widetilde{\nabla}_{e_2}e_3&=&-b_1e_4+(\tfrac{1}{2}-a_3)e_5-a_2e_6\\
\widetilde{\nabla}_{e_2}e_4&=&b_1e_3+a_2e_5-(\tfrac{1}{2}+a_3)e_6\\
\widetilde{\nabla}_{e_2}e_5&=&-\tfrac{2}{3}a_3e_1-\tfrac{2}{3}a_2e_2+\tfrac{2}{3}(\tfrac{1}{2}-a_3)e_3+\tfrac{2}{3}a_2e_4\\
\widetilde{\nabla}_{e_2}e_6&=&\tfrac{2}{3}a_2e_1-\tfrac{2}{3}a_3e_2-\tfrac{2}{3}(\tfrac{1}{2}+a_3)e_4-\tfrac{2}{3}a_2e_3\,.
\end{eqnarray*}
From the Gauss equation, we have that the Gauss curvature $K=\langle R(e_1,e_2)e_2,e_1\rangle$ is given by \begin{eqnarray}
K&=&\langle \widetilde{R}(e_1,e_2)e_2,e_1\rangle-\langle h((e_1,e_2),h(e_1,e_2)\rangle+\langle h(e_2,e_2),h(e_1,e_1)\rangle
\end{eqnarray}
which implies that
\begin{eqnarray*}
-\frac{5}{9}&=&-\frac{4}{3}+\frac{4}{3}(a_2^2+a_3^2)
\end{eqnarray*}
Then
\begin{equation}\label{contradiction}
a_2^2+a_3^2=\frac{7}{12}
\end{equation}
Now by applying that $\widetilde{R}(e_1,e_2).=\widetilde{\nabla}_{e_1}\widetilde{\nabla}_{e_2}X-\widetilde{\nabla}_{e_2}\widetilde{\nabla}_{e_1}X-\widetilde{\nabla}_{[e_1,e_2]}X$ for $X=e_1,\ldots,e_6$, with $[e_1,e_2]=-a_1e_1-b_1e_2$, we deduce the following equations
\begin{align*}
\left\{
\begin{aligned}
-3(a_1^2+b_1^2-e_2(a_1)+e_1(b_1))+\tfrac{5}{3}&=0\\
2(a_1a_3+a_2b_1)+e_1(a_2)-e_2(a_3)&=0\\
2(a_1a_2-a_3b_1)-e_2(a_2)-e_1(a_3)&=0
\end{aligned}
\right.
\end{align*}
If the surface is parallel, $\nabla h=0$ gives the following equations
\begin{align*}
\left\{
\begin{aligned}
\tfrac{2}{3}a_2^2+\tfrac{1}{3}(1-2a_3)a_3&=0\\
\tfrac{2}{3}a_2a_3+\tfrac{1}{3}(1+2a_3)a_2&=0\\
2a_1a_3+e_1(a_2)&=0\\
-2a_1a_2+e_1(a_3)&=0
\end{aligned}
\right.
\end{align*}
This system has a unique null solution, which is in contradiction with $\eqref{contradiction}$. Thus no solutions exist. Then Proposition \ref{prop-parallel-second-fundamental-form} becomes

\begin{theorem}
If $M$ is an almost complex surface in $\nks$ with parallel second fundamental form, then either
\begin{enumerate}
\item P maps the tangent space into the normal space and in this case, $M$ is totally geodesic with constant Gaussian curvature $-\dfrac{4}{3}$. Moreover, $M$ is totally geodesic and congruent to the example described in Proposition 4.5. 
\item P preserves the tangent space, in this case the Gaussian curvature is $0$. Moreover $M$ is totally geodesic and  congruent to either the example described in Proposition 4.1 or the example described in Proposition 4.2. 
\end{enumerate}
\end{theorem}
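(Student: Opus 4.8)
The plan is to assemble the theorem directly from the intermediate results already established in this section, since almost all of the work has been done; the theorem is essentially a clean restatement of Proposition \ref{prop-parallel-second-fundamental-form} together with the congruence statements in Propositions 4.1, 4.2 and 4.5 and the exclusion of the $K=-\tfrac59$ case. First I would invoke Proposition \ref{prop-parallel-second-fundamental-form}, which already splits the analysis into the case where $P$ maps the tangent space into the normal space and the case where $P$ preserves the tangent space, and which in the first case narrows the Gaussian curvature to $K=-\tfrac43$ (totally geodesic) or $K=-\tfrac59$. So the remaining content of the theorem is: (a) eliminate $K=-\tfrac59$, and (b) identify each surviving surface with one of the explicit examples.

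For step (a) I would use the frame $e_1,\dots,e_6$ built from a unit tangent vector $V$ as above, express the ambient Levi-Civita connection $\widetilde\nabla$ in terms of the four functions $a_1,a_2,a_3,b_1$ (using the relation \eqref{G(X,G(Z,W))} to compute all the $G(e_i,e_j)$ and the fact that the $e_3,e_4$-components of $\widetilde\nabla_{e_1}e_1$ vanish), and then impose two sets of constraints. The Gauss equation with $K=-\tfrac59$ forces $a_2^2+a_3^2=\tfrac{7}{12}$ (equation \eqref{contradiction}), while the parallelism condition $\nabla h=0$, written out in this frame, yields the system
\begin{align*}
\left\{
\begin{aligned}
\tfrac{2}{3}a_2^2+\tfrac{1}{3}(1-2a_3)a_3&=0\\
\tfrac{2}{3}a_2a_3+\tfrac{1}{3}(1+2a_3)a_2&=0\\
2a_1a_3+e_1(a_2)&=0\\
-2a_1a_2+e_1(a_3)&=0\,.
\end{aligned}
\right.
\end{align*}
The first two algebraic equations of this system force $a_2=a_3=0$, which contradicts $a_2^2+a_3^2=\tfrac{7}{12}$; hence no parallel surface with $K=-\tfrac59$ exists. (One should double check consistency with the curvature identities $\widetilde R(e_1,e_2)e_i$ for $i=1,\dots,6$, but these are not needed to reach the contradiction.) This leaves, in the $PTM\subset T^\perp M$ branch, only the totally geodesic surface with $K=-\tfrac43$, which by Proposition 4.5 is congruent to the example described there.

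For step (b) in the remaining branch, Proposition \ref{prop-parallel-second-fundamental-form}(2) already tells us that if $P$ preserves the tangent space then $K=0$ and $M$ is totally geodesic, so $M$ falls under the classification of flat totally geodesic almost complex surfaces; by compatibility of $J$ with $g$ the induced metric is definite, and the two cases (positive definite, negative definite) are exactly Propositions 4.1 and 4.2. Collecting these statements gives the theorem. The only genuinely new computation is the parallelism system above and its incompatibility with \eqref{contradiction}; everything else is bookkeeping and citation of the earlier propositions. I expect the main obstacle to be purely computational: correctly deriving the $\widetilde\nabla$-table in terms of $a_1,a_2,a_3,b_1$ and then extracting the $\nabla h=0$ equations without sign errors, since the frame is semi-Riemannian (the vectors $e_5,e_6$ have length $-\tfrac23$) and the bookkeeping of inner products is delicate; once that table is correct, the algebraic contradiction is immediate.
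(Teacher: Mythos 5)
Your proposal follows essentially the same route as the paper: invoke Proposition \ref{prop-parallel-second-fundamental-form}, exclude $K=-\tfrac59$ via the frame $e_1,\dots,e_6$ by playing the Gauss-equation identity $a_2^2+a_3^2=\tfrac{7}{12}$ against the algebraic part of the $\nabla h=0$ system, and then cite Propositions 4.1, 4.2 and 4.5 for the congruence statements. One small caveat: the first two algebraic equations do not literally force $a_2=a_3=0$ (they also admit $(a_2,a_3)=(0,\tfrac12)$ and $(\pm\tfrac{\sqrt3}{4},-\tfrac14)$), but since every solution has $a_2^2+a_3^2\in\{0,\tfrac14\}\neq\tfrac{7}{12}$ the contradiction, and hence your argument, still goes through exactly as in the paper.
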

This means every parallel almost complex surface in $\nks$ is totally geodesic.

\end{document}